\newtheorem{theorem}{Theorem}[]
\newtheorem{proposition}[theorem]{Proposition}
\newtheorem{corollary}[theorem]{Corollary}
\theoremstyle{definition}
\newtheorem{remark}[theorem]{Remark}
\newcommand{\HH}{\mathcal H}
\newcommand{\Z}{\mathbf Z}
\newcommand{\Hol}{\mathrm{Hol}}
\newcommand{\Sym}{\operatorname{Sym}}
\newcommand{\Ker}{\operatorname{Ker}}
\newcommand{\End}{\operatorname{End}}
\newcommand{\Aut}{\operatorname{Aut}}
\newcommand{\Id}{\operatorname{Id}}
\newcommand{\Dic}{\operatorname{Dic}}
\title{Inducing braces and Hopf Galois structures}
\author{Teresa Crespo$^{(1)}$, Daniel Gil-Mu\~noz$^{(2)}$, Anna Rio$^{(3)}$ and Montserrat Vela$^{(3)}$}
\begin{document}

\maketitle

\footnotesize

\noindent
(1) Departament de Matem\`atiques i Inform\`atica, Universitat de Barcelona, Gran Via de les Corts Catalanes 585, 08007, Barcelona (Spain)

\noindent
(2) Charles University, Faculty of Mathematics and Physics, Department of Algebra, Sokolovska 83, 18600 Praha 8, Czech Republic

\noindent
(3) Departament de Matem\`atiques, Universitat Polit\`ecnica de Catalunya, Edifici Omega, Jordi Girona, 1-3, 08034, Barcelona (Spain)

\normalsize	
\begin{abstract}
Let $p$ be a prime number and let $n$ be an integer not divisible by $p$ and such that every group of order $np$ has a normal subgroup of order $p$. (This holds in particular for $p>n$.) We prove that left braces of size $np$ may be obtained as a semidirect product of the unique left brace of size $p$  and a left brace of size $n$. We give a method to determine all braces of size $np$ from the braces of size $n$ and certain classes of morphisms from the multiplicative group of these braces of size $n$ to $\Z_p^*$. From it we derive a formula giving the number of Hopf Galois structures of abelian type $\Z_p \times E$ on a Galois extension of degree $np$ in terms of the number of Hopf Galois structures of abelian type $E$ on a Galois extension of degree $n$. For a prime number $p\geq 7$, we apply the obtained results to describe all left braces of size $12p$ and determine the number of Hopf Galois structures of abelian type on a Galois extension of degree $12p$.
\end{abstract}

\noindent
{\small{\bf Keywords}{:}
Left braces, Holomorphs, regular subgroups, Hopf Galois structures.

\noindent
2020MSC: Primary: 16T05, 16T25, 12F10; Secondary: 20B35, 20B05, 20D20, 20D45.}

\let\thefootnote\relax\footnotetext{The first author was supported
by grant PID2019-107297GB-I00 (Ministerio de Ciencia, Innovaci\'on y Universidades).
The second author was supported by Czech Science Foundation, grant 21-00420M, and by Charles University Research Centre program UNCE/SCI/022.

Email addresses: teresa.crespo@ub.edu, daniel.gil-munoz@mff.cuni.cz, ana.rio@upc.edu, montse.vela@upc.edu }

\section{Introduction}

In \cite{R} Rump introduced an algebraic structure called brace to study
set-theoretic solutions of the Yang-Baxter equation. A left brace is a set $B$ with two operations $+$ and $\cdot$ such
that $(B, +)$ is an abelian group, $(B, \cdot)$ is a group and the brace relation is satisfied, namely,
$$a(b+c) = ab-a+ac,$$
for all $a, b, c \in B$. We call $N=(B, +$) the additive group and $G=(B, \cdot)$ the multiplicative group of the left brace. If $(B,+)$ is not abelian, the corresponding brace is called skew brace.

Let $B_1$ and $B_2$ be left braces. A map $f : B_1 \to B_2$ is said to be a brace morphism if $f(b+b') = f(b)+f(b')$ and $f(bb') = f(b)f(b')$ for all $b,b' \in B_1$. If $f$ is bijective, we say that $f$ is an isomorphism. In that case we say that the braces $B_1$ and $B_2$ are isomorphic.

In \cite{B}
Bachiller proved that given an abelian group $N$, there is
a bijective correspondence between left braces with additive group $N$, and regular subgroups of $\Hol(N)$ such that isomorphic left braces correspond to regular subgroups of $\Hol(N)$ which are conjugate by elements of $\Aut(N)$. In this way he established the connection between braces and Hopf-Galois structures on Galois field extensions.

In \cite{BNY} Lemma 2.1, it is proved that $\Aut(N)$, as a subgroup of $\Hol(N)$, is action-closed with respect to the conjugation action of $\Hol(N)$ on
the set of regular subgroups of $\Hol(N)$. Therefore, given an abelian group
$N$, the set of isomorphism classes of left braces with additive group $N$ is in bijective correspondence with the set of conjugacy classes of regular subgroups in $\Hol(N)$.

In \cite{AB} skew left braces of size $pq$ are classified, where $p>q$ are prime numbers. In \cite{D} a classification of left braces of order $p^2q$, where $p,q$  are prime numbers such that $q>p+1$ is given. In \cite{BNY} the following conjecture on the number $b(12p)$ of isomorphism classes of  left braces of size $12p$ is formulated.
	
	\begin{equation}\label{conj}
b(12p)= \left\{ \begin{array}{lll} 24 & \text{\ if } p\equiv 11 & \pmod{12}, \\
	28 & \text{\ if } p\equiv 5 & \pmod{12}, \\
	34 & \text{\ if } p\equiv 7 & \pmod{12}, \\
	40 & \text{\ if } p\equiv 1 & \pmod{12}.
	\end{array} \right.
	\end{equation}

We note that $b(24)=96, b(36)=46$ and $b(60)=28$ (see \cite{V}).

Let $B_1$ and $B_2$ be left braces. Then $B_1 \times B_2$ together with $+$ and $\cdot$ defined by
$$
(a,b)+(a',b')=(a+a',b+b')\quad (a,b)\cdot(a',b')=(aa',bb')
$$
is a left brace called the direct product of the braces $B_1$ and $B_2$.

Let $B_1$ and $B_2$ be left braces. Let $\tau:(B_2,\cdot)\to\Aut(B_1,+,\cdot)$
be a morphism of groups.
Consider in $B_1\times B_2$
the additive structure of the direct product $(B_1,+)\times (B_2,+)$
$$
(a,b)+(a',b')=(a+a',b+b')
$$
and the multiplicative structure of the semidirect product
$(B_1,\cdot)\rtimes_{\tau} (B_2,\cdot)$
$$
(a,b)\cdot(a',b')=(a\tau_b(a'),b b')
$$
Then, we get a left brace,  which is called the semidirect product of the left braces $B_1$ and $B_2$ via $\tau$.

A Hopf Galois structure on a finite extension of fields $K/k$ is a pair $(\HH,\mu)$ where $\HH$ is a finite cocommutative $k$-Hopf algebra  and $\mu$ is a
Hopf action of $\HH$ on $K$, i.e a $k$-linear map $\mu: \HH\to
\End_k(K)$ giving $K$ a left $\HH$-module algebra structure and inducing a bijection $K\otimes_k \HH\to\End_k(K)$.
Hopf Galois extensions were introduced by Chase and Sweedler in \cite{C-S}.
For a Galois field extension $K/k$ with Galois group $G$, Greither and
Pareigis \cite{G-P} give a bijective correspondence between Hopf Galois structures on $K/k$ and regular subgroups $N$ of $\Sym(G)$ normalized by $\lambda (G)$, where
$\lambda$ denotes left translation. For a given Hopf Galois structure on $K/k$, we will refer to the isomorphism class of the corresponding group $N$ as the type of the Hopf Galois
structure.
By Byott translation theorem \cite{By}, a correspondence is established between regular subgroups $N$ of $\Sym(G)$ normalized by $\lambda (G)$ and regular subgroups of the holomorph $\Hol(N)=N\rtimes \Aut N$. As a corollary, Byott obtains the following formula.

\begin{proposition}[\cite{By} Corollary to Proposition 1]\label{Bformula} Let $K/k$ be a Galois extension with Galois group $G$. Let $N$ be a group of order $|G|$. Let $a(N,G)$ denote the number of Hopf Galois structures of type $N$ on $K/k$ and let $b(N,G)$ denote the number of regular subgroups of $\Hol(N)$ isomorphic to $G$. Then

$$a(N,G)= \dfrac{|\Aut G|}{|\Aut N|} \, b(N,G).$$
\end{proposition}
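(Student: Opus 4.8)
The plan is to deduce the formula from the Greither--Pareigis correspondence together with the ``translation'' bijection between regular subgroups of $\Sym(G)$ normalized by $\lambda(G)$ and regular subgroups of $\Hol(N)$; this is essentially Byott's original argument in \cite{By}, which I sketch here. First I would reduce the left-hand side: by the Greither--Pareigis theorem, $a(N,G)$ equals the number of regular subgroups $M\le\Sym(G)$ with $M\cong N$ that are normalized by $\lambda(G)$, where $\lambda\colon G\to\Sym(G)$ is left translation; call this set $\mathcal R$, so $a(N,G)=|\mathcal R|$. For the right-hand side I would use the classical fact that $\Hol(N)=\Norm_{\Sym(N)}(\lambda(N))$ (for abelian $N$ one has $\lambda(N)=\rho(N)$, and in general $\Hol(N)=\lambda(N)\rtimes\Aut N$ is exactly this normalizer), so that $b(N,G)$ counts the regular subgroups $G'\le\Sym(N)$ with $G'\cong G$ that normalize $\lambda(N)$; call this set $\mathcal S$, so $b(N,G)=|\mathcal S|$.

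Next I would rigidify both sets by a choice of isomorphism, which is what produces the automorphism-group ratio. Put $E=\{(M,\alpha): M\in\mathcal R,\ \alpha\colon M\to N \text{ an isomorphism}\}$ and $F=\{(G',\beta): G'\in\mathcal S,\ \beta\colon G'\to G \text{ an isomorphism}\}$. Since the set of isomorphisms between two isomorphic finite groups is a torsor under the automorphism group of the target, one gets $|E|=|\mathcal R|\,|\Aut N|=a(N,G)\,|\Aut N|$ and $|F|=|\mathcal S|\,|\Aut G|=b(N,G)\,|\Aut G|$. It therefore suffices to construct a bijection $E\leftrightarrow F$.

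For the bijection: given $(M,\alpha)\in E$, since $M$ acts regularly on $G$, the map $\theta\colon N\to G$, $\theta(n)=\alpha^{-1}(n)(1_G)$, is a bijection with $\theta(1_N)=1_G$, and conjugation by $\theta$ gives an isomorphism $\theta_*\colon\Sym(N)\to\Sym(G)$. A direct computation shows $\theta_*\circ\lambda=\alpha^{-1}$, hence $\theta_*(\lambda(N))=M$; consequently, because $\lambda(G)$ normalizes $M$, the subgroup $G':=\theta_*^{-1}(\lambda(G))$ normalizes $\lambda(N)$, so $G'\le\Hol(N)$, it is regular on $N$, and $G'\cong\lambda(G)\cong G$. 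Setting $\beta:=\lambda^{-1}\circ(\theta_*|_{G'})$ gives a well-defined map $E\to F$, $(M,\alpha)\mapsto(G',\beta)$. Running the same construction with the roles of $N$ and $G$ interchanged — from $(G',\beta)$ build $\psi\colon G\to N$, $\psi(g)=\beta^{-1}(g)(1_N)$, then set $M:=\psi_*^{-1}(\lambda(N))$ and $\alpha:=\lambda^{-1}\circ(\psi_*|_M)$ — yields a candidate inverse $F\to E$; a short verification (one checks $\psi=\theta^{-1}$ after composing the two constructions, and that the resulting $(M,\alpha)$ is the original one) shows the two maps are mutually inverse. Then $a(N,G)\,|\Aut N|=|E|=|F|=b(N,G)\,|\Aut G|$, which is the asserted formula.

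The only substantive point, and the step I expect to be the main obstacle, is the permutation-theoretic bookkeeping in the third paragraph: verifying that $\theta_*$ carries the distinguished regular subgroup $\lambda(N)$ precisely onto $M$, and hence that the condition ``$\lambda(G)$ normalizes $M$'' transports into the condition ``$G'$ lies in $\Hol(N)$''. This is exactly the content of Byott's translation; everything else is torsor counting and formal inversion. A secondary point requiring care is that the construction be genuinely canonical, with no hidden dependence on auxiliary choices — this is ensured by always using the group identity as the base point of the regular action, so that $\theta(1_N)=1_G$ and $\psi(1_G)=1_N$.
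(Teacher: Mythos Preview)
The paper does not give its own proof of this proposition: it is quoted verbatim as a known result from \cite{By} (the ``Corollary to Proposition 1'' there) and is used only as a black box when computing Hopf Galois structure counts. So there is no argument in the paper to compare against.

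That said, your sketch is a faithful reconstruction of Byott's original proof. The pairs $(M,\alpha)$ and $(G',\beta)$ are just another packaging of the regular \emph{embeddings} $N\hookrightarrow\Sym(G)$ and $G\hookrightarrow\Hol(N)$ that Byott counts, and your bijection $E\leftrightarrow F$ is exactly his translation lemma. The torsor counting that produces the factors $|\Aut N|$ and $|\Aut G|$ is correct, and the only delicate step --- that conjugation by the orbit map $\theta$ carries $\lambda(N)$ onto $M$ and hence swaps the two normalizer conditions --- is the one you already flag. One small remark: the identity $\Hol(N)=\Norm_{\Sym(N)}(\lambda(N))$ holds for all groups $N$, not just abelian ones, so your parenthetical caveat is unnecessary.
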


In \cite{RCGV} we have proved that every brace of size $8p$, for a prime number $p \neq 3,7$, is the semidirect product of a brace of size 8 and the unique brace of size $p$. We have used this fact to determine all braces of size $8p$. In this paper we generalize this result to braces of size $np$, where $p$ is a prime number and $n$ an integer not divisible by $p$ and such that every group of order $np$ has a normal subgroup of order $p$. We note that these hypothesis hold in particular for $p>n$. From our result on braces we derive a formula giving the number of Hopf Galois structures of abelian type $\Z_p \times E$ on a Galois extension of degree $np$. For a prime number $p\geq 7$, we apply the obtained results to describe all left braces of size $12p$ and determine the number of Hopf Galois structures of abelian type on a Galois extension of degree $12p$.
As a consequence of our classification of left braces of size $12p$, for $p$ a prime number, $p \geq 7$, we establish the validity of conjecture (\ref{conj}).

From now on, $p$ and $n$ will always satisfy the following hypothesis.

\vspace{0.4cm}

(H):  {\it $p$ is a prime number and $n$ an integer such that $p$ does not divide $n$

\qquad and each group of order $np$ has a normal subgroup of order $p$.}

\vspace{0.4cm}

\section{Braces of size $np$}

The following proposition is a generalization of \cite{RCGV}, Proposition 1.

\begin{proposition}\label{braceprod} Let $p$ be a prime and $n$ an integer such that $p$ does not divide $n$ and each group of order $np$ has a normal subgroup of order $p$. Then every left brace of size $np$ is a direct or semidirect product of the trivial brace of size $p$ and a left brace  of size $n$.
\end{proposition}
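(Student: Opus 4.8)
The plan is to split $B$ along the Sylow decomposition of its additive group and to show that both pieces persist multiplicatively, with the $p$-part an ideal. Write $N=(B,+)$, an abelian group of order $np$ with $p\nmid n$; let $P$ be its $p$-primary component and $N_0$ the product of the remaining primary components, so $|P|=p$, $|N_0|=n$, $N=P\oplus N_0$, and $P,N_0$ are characteristic in $N$. Recalling that $\lambda_a(b):=ab-a$ defines a homomorphism $\lambda\colon(B,\cdot)\to\Aut(B,+)$ with $ab=a+\lambda_a(b)$, the characteristic subgroups $P$ and $N_0$ are stable under every $\lambda_a$, hence are left ideals of $B$; in particular $ab=a+\lambda_a(b)$ lies in $P$ (resp.\ $N_0$) whenever $a,b$ do, so $(P,\cdot)$ and $(N_0,\cdot)$ are subgroups of $(B,\cdot)$ of orders $p$ and $n$. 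Thus $P$ and $N_0$ are sub-braces of $B$ of sizes $p$ and $n$; moreover $P$ is the trivial brace of size $p$, since its $\lambda$-map is a homomorphism from a group of order $p$ to $\Aut(\Z_p)\cong\Z_p^*$, hence trivial.

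Next I use hypothesis (H) to promote $P$ to an ideal. Since $p\nmid n$, $(P,\cdot)$ is a Sylow $p$-subgroup of $(B,\cdot)$; by (H) the group $(B,\cdot)$ has a normal subgroup of order $p$, which, being normal, is its unique Sylow $p$-subgroup and therefore equals $(P,\cdot)$. Hence $(P,\cdot)\trianglelefteq(B,\cdot)$, and together with $\lambda$-invariance this makes $P$ an ideal of $B$. From $P\cap N_0=\{0\}$ and $|P|\,|N_0|=|B|$ we obtain $(B,\cdot)=(P,\cdot)\rtimes(N_0,\cdot)$ as groups, the action $\tau\colon(N_0,\cdot)\to\Aut(P,\cdot)$ being conjugation; as $P$ is the trivial brace, $\Aut(P,\cdot)=\Aut(P,+,\cdot)$, so $\tau$ is a morphism of the kind required to form the brace semidirect product $P\rtimes_\tau N_0$.

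It remains to check that $B$ is this semidirect product, i.e.\ that the additive-projection map $x\mapsto(\pi_P(x),\pi_{N_0}(x))$ is also an isomorphism onto $P\rtimes_\tau N_0$. The key observation is that for every $x$ the multiplicative coset $xP$ coincides with the additive coset $x+P$, because $xP=x+\lambda_x(P)=x+P$. Consequently $N_0$ is a common transversal of $P$ for the two structures, and comparing the additive and multiplicative decompositions of a single element $a\cdot b$ with $a\in P$, $b\in N_0$ forces $\lambda_a|_{N_0}=\mathrm{id}$, i.e.\ $ab=a+b$ whenever $a\in P$, $b\in N_0$. Writing $x=a+b=a\cdot b$ and $x'=a'+b'=a'\cdot b'$ accordingly, one computes $xx'=a\,(ba'b^{-1})\,(bb')=\bigl(a\,\tau_b(a')\bigr)+(bb')$, which is exactly the multiplication law of $P\rtimes_\tau N_0$, while additivity of the projections is immediate. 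Hence $B\cong P\rtimes_\tau N_0$, a semidirect product of the trivial brace of size $p$ and the brace $N_0$ of size $n$; it is the direct product precisely when $\tau$ is trivial. Everything up through the identification of $P$ as a normal Sylow subgroup is routine once one has the basic brace dictionary; the delicate point is this final compatibility, which rests on the coincidence of additive and multiplicative cosets of $P$ — and that is exactly where normality of $P$, i.e.\ hypothesis (H), is indispensable, since otherwise $N_0$ need not even be a transversal for the multiplicative cosets of $P$.
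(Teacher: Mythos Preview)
Your proof is correct. Both you and the paper arrive at the same decomposition $B\cong P\rtimes_\tau N_0$, but the routes differ. The paper writes $N=\Z_p\times E$ and $G=\Z_p\rtimes_\tau F$ via Schur--Zassenhaus and then directly expands the brace relation in coordinates $(a_1,a_2)$, reading off from the second component that $E$ and $F$ form a sub-brace of size $n$; this computation tacitly assumes that the additive $\Z_p$ and the multiplicative $\Z_p$ are the \emph{same} subset of $B$, and likewise that $E=F$ as sets. You instead work structurally with the $\lambda$-map: characteristicity of $P$ and $N_0$ makes them left ideals (hence sub-braces), hypothesis~(H) upgrades $P$ to an ideal, and the coset identity $xP=x+P$ then forces $ab=a+b$ for $a\in P$, $b\in N_0$, which is exactly the compatibility the paper's coordinate computation presupposes. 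So your argument is a bit longer but fills in the identification step that the paper leaves implicit, and it makes transparent where each hypothesis enters; the paper's version, in turn, has the virtue that once the identification is granted, the brace relation on the quotient drops out of a two-line calculation. One minor quibble: your closing remark that without normality ``$N_0$ need not even be a transversal'' is not quite the issue (since $|N_0P|=|B|$ regardless); what actually fails without~(H) is that conjugation by $N_0$ need not preserve $P$, so the semidirect-product rewriting $xx'=a\,\tau_b(a')\,(bb')$ breaks down.
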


\begin{proof}
Let $B$ be a left brace of size $np$ with additive group $N$ and multiplicative group $G$. Then, by the Schur-Zassenhaus theorem, $N=\Z_p\times E$ with $E$ an abelian group of order $n$ and $G=\Z_p\rtimes_{\tau} F$ with $F$ a group of order $n$ and $\tau:F\to\Aut(\Z_p)$ a group morphism (the trivial one giving the direct product). Let us observe that, since we are working with the trivial brace of size $p$, the group of brace automorphisms is the classical group $\Aut(\Z_p)\simeq Z_p^*$.

Then, for $(a_1,a_2),(b_1,b_2), (c_1,c_2) \in B$,
$$\begin{matrix}
(a_1,a_2)((b_1,b_2)+(c_1,c_2))+(a_1,a_2)=
(a_1,a_2)(b_1+c_1,b_2+c_2)+(a_1,a_2)=\\
=(a_1+\tau_{a_2}(b_1+c_1)+a_1, a_2(b_2+c_2)+a_2).
\end{matrix}
$$
On the other hand,
$$\begin{matrix}
(a_1,a_2)(b_1,b_2)+(a_1,a_2)(c_1,c_2)
=(a_1+\tau_{a_2}(b_1)+a_1+\tau_{a_2}(c_1), a_2b_2+a_2c_2).
\end{matrix}
$$
Therefore, from the brace condition of $B$ we obtain an equality in the second component which tells us that we have a brace $B'$ of size $n$ with  additive group $E$ and multiplicative group $F$. Then, $B$ is the semidirect product via $\tau$ of the trivial brace with group $\Z_p$ and this brace $B'$.
\end{proof}

\begin{remark} The third Sylow theorem gives that the hypothesis in Proposition \ref{braceprod} are satisfied in particular when $p>n$.
\end{remark}

As a corollary to Proposition \ref{braceprod}, we obtain that for each brace of size $n$, there is a left brace of size $np$ which is the direct product of the unique brace of size $p$ and the given brace of size $n$. The braces of size $np$ which are a semidirect product of the unique brace of size $p$ and a brace of size $n$ are determined by the following proposition, which generalizes Proposition 4 in \cite{RCGV}.

\begin{proposition}\label{determsemid}
  Let $p$ be a prime and $n$ an integer such that $p$ does not divide $n$ and each group of order $np$ has a normal subgroup of order $p$. Let $N=\Z_p\times E$ be an abelian group of order $np$.

 The conjugacy classes of regular subgroups of $\Hol(N)$ are in one-to-one correspondence with couples $(F,\tau)$ where $F$ runs over a set of representatives of conjugacy classes of regular subgroups of $\Hol(E)$ and $\tau$ runs over representatives of classes of group morphisms $\tau:F\to \Aut(\Z_p)$ under the relation
 $
 \tau\simeq \tau'$ if and only if $\tau=\tau'\circ \Phi_{\nu}|_F$
 where $\nu\in \Aut(E)$ and $\Phi_\nu$ is the corresponding inner automorphism of $\Hol(E)$.
\end{proposition}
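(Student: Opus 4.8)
The plan is to transfer everything to the brace side via Bachiller's dictionary, do the work there, and translate back at the end. By the correspondence recalled in the introduction (Bachiller's theorem together with \cite{BNY}, Lemma 2.1), conjugacy classes of regular subgroups of $\Hol(N)$ are in bijection with isomorphism classes of left braces with additive group $N=\Z_p\times E$, conjugacy classes of regular subgroups of $\Hol(E)$ correspond to isomorphism classes of left braces with additive group $E$, and, for a left brace $B'$ with additive group $E$ corresponding to the regular subgroup $F\subseteq\Hol(E)$, its group of brace automorphisms is identified with $\Norm_{\Aut(E)}(F)$ acting on $F$ (and on $B'$) by conjugation, i.e. by the maps $\Phi_\nu|_F$. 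Hence it suffices to prove: isomorphism classes of left braces with additive group $N$ are in bijection with pairs $([B'],[\tau])$, where $B'$ ranges over isomorphism classes of left braces with additive group $E$, $\tau$ ranges over group morphisms from $(B',\cdot)$ to $\Aut(\Z_p)$, and $\tau\simeq\tau'$ if and only if $\tau=\tau'\circ\phi$ for some brace automorphism $\phi$ of $B'$.

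First I would write down the two maps. To a pair $(B',\tau)$ associate the semidirect product $\Z_p\rtimes_\tau B'$ of the trivial brace of size $p$ with $B'$, as defined in the introduction; its additive group is $\Z_p\times E=N$. Conversely, given a left brace $B$ with additive group $N$, single out $I:=\Z_p\times\{0\}$, which is the unique subgroup of order $p$ of $(B,+)$ since $p\nmid n$; by Proposition \ref{braceprod} (and its proof) $I$ is an ideal of $B$ isomorphic to the trivial brace of size $p$, the quotient brace $B/I$ has additive group $E$, and $B\cong\Z_p\rtimes_\tau(B/I)$, where $\tau$ is the action of $(B/I,\cdot)$ on $I\cong\Z_p$ arising from the extension $1\to I\to(B,\cdot)\to(B/I,\cdot)\to 1$ (an intrinsic datum, as $I$ is abelian). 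Transporting $\tau$ along a chosen brace isomorphism from $B/I$ to a representative $B'$ produces a pair $([B'],[\tau])$; since $I$ is the unique ideal of $B$ of order $p$, the class of $B/I$ is an invariant of $B$, and a different representative or isomorphism changes $\tau$ exactly by precomposition with a brace automorphism of $B'$, so the pair is well defined.

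Next I would check the two maps are mutually inverse. Starting from $(B',\tau)$, the brace $\Z_p\rtimes_\tau B'$ has $\Z_p\times\{0\}$ as its unique order-$p$ ideal, quotient $B'$, and the complement $\{0\}\times(B',\cdot)$ realizes the action $\tau$, so we recover $([B'],[\tau])$; that $B\cong\Z_p\rtimes_\tau(B/I)$ is Proposition \ref{braceprod}. The heart of the argument is injectivity: deciding when $\Z_p\rtimes_\tau B'\cong\Z_p\rtimes_{\tau'}B''$ as braces. Take a brace isomorphism $f$ between them. It carries the unique order-$p$ additive subgroup of the source to that of the target, hence restricts to an automorphism $\alpha$ of $\Z_p$ and descends to a brace isomorphism $\bar f\colon B'\to B''$; writing $f(a_1,a_2)=(\alpha a_1+g(a_2),\bar f(a_2))$ for a group morphism $g\colon E\to\Z_p$ and imposing compatibility of $f$ with the multiplicative (semidirect) structures, a short computation — specializing to $b_2=0$ — forces $\tau_{a_2}=\tau'_{\bar f(a_2)}$ for all $a_2$, that is $\tau=\tau'\circ\bar f$; conversely, given such a $\bar f$, taking $\alpha=\mathrm{id}$ and $g=0$ yields an isomorphism. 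In particular $B'\cong B''$, so, with representatives fixed, the first component is pinned down and $\tau\simeq\tau'$ in the stated sense. Finally, translating back through Bachiller's dictionary, $B'$ corresponds to a representative regular subgroup $F\subseteq\Hol(E)$, $\Aut(B')$ to $\Norm_{\Aut(E)}(F)$ acting by the $\Phi_\nu|_F$, and the relation $\tau=\tau'\circ\phi$ becomes $\tau=\tau'\circ\Phi_\nu|_F$, which is the assertion.

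I expect the main obstacle to be the bookkeeping in the injectivity step: making precise that every brace isomorphism preserves the canonical ideal $I$ and descends, and that — because $\Aut(\Z_p)$ is abelian — neither the automorphism $\alpha$ of $\Z_p$ nor the inner automorphisms of $(B',\cdot)$ affect the class of $\tau$, so that only $\Aut(B')$ acting through $F$ matters. Tied to this is the identification of $\Aut(B')$ with $\Norm_{\Aut(E)}(F)$ acting via the inner automorphisms $\Phi_\nu$ of $\Hol(E)$, which is precisely where Bachiller's correspondence and \cite{BNY}, Lemma 2.1, are used.
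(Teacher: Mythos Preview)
Your approach is correct but genuinely different from the paper's. The paper works entirely inside $\Hol(N)$: using that $\Aut(N)=\Z_p^*\times\Aut(E)$ and hence $\Hol(N)=\Hol(\Z_p)\times\Hol(E)$, it writes down, for each pair $(F,\tau)$ with $F\subseteq\Hol(E)$ regular, the explicit subgroup
\[
G=\{(m,\tau(f),f):m\in\Z_p,\ f\in F\}\subseteq\Hol(\Z_p)\times\Hol(E),
\]
checks directly that $G$ is regular iff $F$ is, and then computes $\Phi_{(i,\nu)}(G)$ by hand to obtain the conjugacy criterion $F'=\Phi_\nu(F)$ and $\tau=\tau'\circ\Phi_\nu|_F$. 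No passage to braces is made for the equivalence relation; it is an explicit holomorph calculation.

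You instead push the whole problem through Bachiller's dictionary and carry out the comparison on the brace side, classifying isomorphisms between semidirect-product braces $\Z_p\rtimes_\tau B'$ and then translating back via the identification $\Aut(B')\cong\Norm_{\Aut(E)}(F)$ acting by $\Phi_\nu|_F$. Both routes lean on Proposition~\ref{braceprod} (the paper implicitly, to know that every conjugacy class of regular subgroups is represented by some $G$ of the displayed shape). Your argument is more conceptual and makes transparent why the normalizer---and hence the inner automorphisms $\Phi_\nu$---enter; the paper's explicit computation, on the other hand, immediately delivers Corollary~\ref{cor} on the lengths of conjugacy classes, which is what the later applications actually use. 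One minor simplification you missed: since $\gcd(|E|,p)=1$, every additive homomorphism $g\colon E\to\Z_p$ is zero, so the cross term $g$ in your decomposition $f(a_1,a_2)=(\alpha a_1+g(a_2),\bar f(a_2))$ vanishes automatically and the specialization to $b_2=0$ is unnecessary.
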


\begin{proof} As in Proposition \ref{braceprod}, we may apply the Schur-Zassenhaus theorem and obtain that groups of order $np$ are semidirect products $G=\Z_p\rtimes_{\tau} F$ with $F$ a group of order $n$ and $\tau:F\to\Aut(\Z_p)$ a group morphism.

For a given couple $(F,\tau)$ the semidirect product is
$$
G=\Z_p\rtimes_{\tau} F=
\{((m,\tau(f)), f)\mid\  m\in\Z_p,\  f\in F\}\subseteq
(\Z_p\rtimes \Z_p^*)\times \Hol(E)=\Hol(N)
$$
and
the action on $N$ is given by
$((m,k),f)(z,x)=(m+kz, fx)$. Since $G$ contains $\Z_p$, we have transitivity in the first component and $G$ is regular in $\Hol(N)$ if and only if $F$ is regular in $\Hol(E)$.

Let us describe inner automorphisms of $\Hol(N)=(\Z_p\rtimes \Z_p^*)\times (E\rtimes \Aut(E))$. We write elements in
$\Hol(N)$ as $(m,k,a,\sigma)$ accordingly. Since we are dealing with regular subgroups, we just have to consider conjugation by elements
$(i,\nu)\in\Aut(N)=\Z_p^*\times \Aut(E)$. Let $\Phi_{(i,\nu)}$ be the inner automorphism corresponding to $(i,\nu)$ inside $\mathrm{Hol}(N)$. Then,
$$
\begin{array}{lll} \Phi_{(i,\nu)}(m,k,a,\sigma)&=&
(0,i,0,\nu)(m,k,a,\sigma)(0,i,0,\nu)^{-1} \\ [10pt]
&=&
(im,ik,\nu(a),\nu\sigma)(0,i^{-1},0,\nu^{-1})\\ [10pt]
&=&
(im,k,\nu(a),\nu\sigma\nu^{-1})
\end{array}
$$
If we work in $\Hol(E)$, conjugation by $\nu\in\Aut(E)$ is
$$
\Phi_{\nu}(a,\sigma)=(0,\nu)(a,\sigma)(0,\nu^{-1})=(\nu(a),\nu\sigma\nu^{-1}).
$$

Let
$
G=\Z_p\rtimes_{\tau} F=
\{(m,\tau(a,\sigma), a,\sigma)\mid m\in\mathbb{Z}_p,\,(a,\sigma)\in F\}.
$
Then,
$$
\Phi_{(i,\nu)}(G)
=\{ (im, \tau(a,\sigma),\nu(a),\nu\sigma\nu^{-1})\mid m\in\mathbb{Z}_p,\,(a,\sigma)\in F\}.
$$
Since $i\in \Z_p^*$, $im$ runs over $\Z_p$ as $m$ does.
Therefore, if $(F',\tau')$ is another pair, we have
$$\Phi_{(i,\nu)}(G)=\Z_p\rtimes_{\tau'} F'\iff
F'=\Phi_{\nu}(F),\mbox{ and }
\tau=\tau'\circ \Phi_{\nu}|_F.
$$
Let us observe that in that case $\ker\tau'=\Phi_{\nu}(\ker\tau)$.
\end{proof}

\section{Hopf Galois structures on a Galois field extension of degree $np$}

From Proposition \ref{determsemid} we obtain the following corollary.

\begin{corollary}\label{cor}
Let $E$  be  a group of order $n$, $N=\Z_p \times E$. Let $F$ be a regular subgroup of $\Hol(E)$ and $\tau:F \to \Z_p^*$ a group morphism. The length of the conjugacy class of the regular subgroup of $\Hol(N)$ corresponding to $(F,\tau)$ is equal to the length of the conjugacy class of $F$ in $\Hol(E)$ times the number of morphisms from $F$ to $\Z_p^*$ equivalent to $\tau$ under the relation defined in Proposition \ref{determsemid}.
\end{corollary}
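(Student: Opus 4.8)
The plan is to read off Corollary \ref{cor} directly from the explicit description of conjugacy classes in Proposition \ref{determsemid}, making the ``one-to-one correspondence'' there into a count of orbit lengths. First I would recall that, by Proposition \ref{determsemid}, the regular subgroup of $\Hol(N)$ attached to a couple $(F,\tau)$ is $G = \Z_p \rtimes_\tau F$, and that for $(i,\nu) \in \Aut(N) = \Z_p^* \times \Aut(E)$ we have, from the computation in that proof,
$$
\Phi_{(i,\nu)}(G) = \Z_p \rtimes_{\tau'} F' \iff F' = \Phi_\nu(F) \text{ and } \tau = \tau' \circ \Phi_\nu|_F .
$$
So the $\Aut(N)$-conjugates of $G$ are exactly the subgroups $\Z_p \rtimes_{\tau'} F'$ with $F'$ running over the $\Aut(E)$-conjugacy class of $F$ inside $\Hol(E)$ and, for each such $F'$, the morphism $\tau' : F' \to \Z_p^*$ running over those related to $\tau$ by the equivalence of Proposition \ref{determsemid}.

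Next I would organize this count. Write $\mathcal{C}$ for the conjugacy class of $F$ in $\Hol(E)$ and pick $\nu_0 \in \Aut(E)$ with $F' = \Phi_{\nu_0}(F)$ for a chosen $F' \in \mathcal{C}$. Conjugating by $(i,\nu_0)$ sends the set of morphisms $F \to \Z_p^*$ equivalent to $\tau$ bijectively onto the set of morphisms $F' \to \Z_p^*$ equivalent to the transported morphism; in particular the number of $\tau'$ attached to each fixed $F' \in \mathcal{C}$ is independent of $F'$ and equals the number $N_\tau$ of morphisms $F \to \Z_p^*$ equivalent to $\tau$ (note $\Phi_\nu|_F$ depends on $\nu$ only through its class; since $i$ acts trivially on the $\tau$-component this is exactly the relation of Proposition \ref{determsemid}). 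Moreover, distinct couples $(F',\tau')$ in this list give distinct subgroups of $\Hol(N)$, because $F'$ is recovered as the image of $G$ under the projection $\Hol(N) \to \Hol(E)$ and $\tau'$ is recovered from the $\Z_p^*$-component. Hence the length of the conjugacy class of $G$ equals $|\mathcal{C}| \cdot N_\tau$, which is precisely the assertion.

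The only genuine point to check — and hence the main obstacle, though it is a mild one — is the independence of $N_\tau$ from the choice of representative $F' \in \mathcal{C}$, i.e.\ verifying that conjugation by a suitable automorphism matches the two equivalence relations on morphisms compatibly. This follows because conjugation by $(i,\nu)$ is a group automorphism of $\Hol(N)$ permuting regular subgroups, hence it carries the orbit of $G$ to itself and acts on the set of couples by $(F,\tau) \mapsto (\Phi_\nu(F), \tau \circ \Phi_\nu^{-1}|_{\Phi_\nu(F)})$; restricting this bijection to the fibre over $F$ and the fibre over $F'$ shows the fibres have equal cardinality. I would phrase the whole argument in two or three sentences, citing the displayed equivalence from the proof of Proposition \ref{determsemid} for the characterization of conjugates and noting that the two recovery maps (projection to $\Hol(E)$ for $F'$, and the $\Z_p^*$-coordinate for $\tau'$) guarantee the listed couples are pairwise distinct, so that the orbit length is the product of the two counts.
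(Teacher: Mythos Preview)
Your proposal is correct and matches the paper's approach: the paper simply asserts the corollary as an immediate consequence of Proposition \ref{determsemid} without writing out a proof, and what you have done is exactly the natural unpacking of that proposition's displayed equivalence $\Phi_{(i,\nu)}(G)=\Z_p\rtimes_{\tau'}F' \iff F'=\Phi_\nu(F)$ and $\tau=\tau'\circ\Phi_\nu|_F$ into a count of orbit sizes. Your two observations---that distinct couples $(F',\tau')$ give distinct subgroups of $\Hol(N)$ via the projection to $\Hol(E)$ and the $\Z_p^*$-coordinate, and that the fibre of admissible $\tau'$ over each $F'$ in the $\Aut(E)$-orbit of $F$ has constant cardinality $N_\tau$---are precisely the content needed, and both are correct.
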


Using this corollary, we shall determine, the number of regular subgroups of the holomorph of $N$. Applying then Byott's formula (Proposition \ref{Bformula}), we shall obtain the number of Hopf Galois structures of abelian type on a Galois extension of degree $np$. We note that all these Galois structures are induced, in the sense of \cite{CRV}, by Theorem 9 in loc. cit.
In order to apply Byott's formula, we determine the automorphisms of a semidirect product $\Z_p \rtimes_{\tau} F$.

Let $G=\Z_p \rtimes F$, with $F$ a group of order $n$. By the Schur-Zassenhaus theorem, any subgroup of $G$ of order equal to  $|F|$ is conjugate to $F$. We assume that the semidirect product is not direct, then $F$ has exactly $p$ conjugates, namely $F_i:=(i,1_F)F(-i,1_F), 0\leq i \leq p-1$.  If $\varphi$ is an automorphism of $G$, then $\varphi(\Z_p)=\Z_p$ and $\varphi(F)$ is a subgroup of $G$ isomorphic to $F$. We have then $\varphi(F)=F_i$ for some $i$. Let

$$S= \{ \varphi \in \Aut G \, : \, \varphi(F)=F \}.$$

\noindent
Clearly $S$ is a subgroup of $\Aut(G)$. Let $C_i$ denote conjugation by $(i,1)$ in $\Aut(G)$. Then $\{ C_i \}_{0\leq i \leq p-1}$ is a transversal of $S$ in $\Aut (G)$, hence $|\Aut (G)|=p|S|$.

We give now a characterization of $S$ in terms of $\Aut \Z_p, \Aut F$ and the morphism $\tau:F \rightarrow \Aut \Z_p\simeq \Z_p^*$ defining the semidirect product $\Z_p \rtimes F$.

\begin{proposition}\label{prop} The image of the injective map

$$S \rightarrow \Aut \Z_p \times \Aut F, \quad \varphi \mapsto (\varphi_{|\Z_p},\varphi_{|F})$$

\noindent
is precisely the set of pairs $(f,g) \in \Aut \Z_p \times \Aut F$ such that $\tau g=\tau$.

\end{proposition}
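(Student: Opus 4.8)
The plan is to verify the two inclusions of the claimed equality of subsets of $\Aut\Z_p\times\Aut F$, after first recording why the map is well defined and injective. Since any $\varphi\in S$ satisfies $\varphi(\Z_p)=\Z_p$ (as recalled just above the statement) and $\varphi(F)=F$, the restrictions $f:=\varphi_{|\Z_p}$ and $g:=\varphi_{|F}$ really do lie in $\Aut\Z_p$ and $\Aut F$; and since every element of $G$ can be written as $(z,1)(0,x)$ with $z\in\Z_p$ and $x\in F$, the automorphism $\varphi$ is determined by the pair $(f,g)$, which gives injectivity. Throughout I write elements of $G=\Z_p\rtimes_\tau F$ as pairs $(z,x)$, so that $(z,x)(z',x')=(z+\tau_x(z'),xx')$, where $\tau_x$ is multiplication by the unit $\tau(x)\in\Z_p^*$.

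For the inclusion asserting that every pair in the image satisfies $\tau g=\tau$, I would take $\varphi\in S$ with associated pair $(f,g)$, so that $\varphi(z,1)=(f(z),1)$ and $\varphi(0,x)=(0,g(x))$, and apply $\varphi$ to the identity $(0,x)(z,1)(0,x)^{-1}=(\tau_x(z),1)$ valid in $G$. The left-hand side becomes $(0,g(x))(f(z),1)(0,g(x))^{-1}=(\tau_{g(x)}(f(z)),1)$, while the right-hand side becomes $(f(\tau_x(z)),1)=(\tau_x(f(z)),1)$, the last step because $\Aut\Z_p\simeq\Z_p^*$ is abelian, so $f$ commutes with multiplication by $\tau(x)$. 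Comparing the two and choosing $z$ with $f(z)\neq 0$ forces $\tau(g(x))=\tau(x)$ for every $x\in F$, that is, $\tau g=\tau$.

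For the reverse inclusion, given $(f,g)\in\Aut\Z_p\times\Aut F$ with $\tau g=\tau$, I would simply define $\varphi(z,x):=(f(z),g(x))$. It is a bijection of $G$ (inverse $(z,x)\mapsto(f^{-1}(z),g^{-1}(x))$) preserving $\Z_p$ and $F$ setwise, so once it is shown to be a homomorphism it lies in $S$ and restricts to $f$ and $g$. The homomorphism check is one line: $\varphi\bigl((z,x)(z',x')\bigr)=(f(z)+\tau_x(f(z')),g(x)g(x'))$, whereas $\varphi(z,x)\,\varphi(z',x')=(f(z)+\tau_{g(x)}(f(z')),g(x)g(x'))$, and these agree precisely because $\tau(g(x))=\tau(x)$. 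I do not anticipate a genuine obstacle here; the only points requiring attention are that $\Aut\Z_p$ is abelian — so that $f$ and the $\tau$-action commute — and the already established fact that every automorphism of $G$ fixes $\Z_p$.
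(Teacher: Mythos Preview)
Your proof is correct and follows essentially the same approach as the paper's: both directions hinge on the commutativity of $\Aut\Z_p$, and your reverse inclusion is identical to the paper's. The only cosmetic difference is that for the forward inclusion you apply $\varphi$ to the conjugation identity $(0,x)(z,1)(0,x)^{-1}=(\tau_x(z),1)$, whereas the paper applies it to the equivalent commutation relation $x\cdot 1=\tau(x)\cdot x$; the underlying computation is the same.
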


\begin{proof} Let $\varphi \in \Aut G$. For $x \in F$, $1 \in \Z_p$, we have $x1=\tau(x)x$. Applying $\varphi$ to this equality, we get $\varphi(x) \varphi(1)=\varphi(\tau(x)) \varphi(x)$. Now, since $\varphi(x) \in F$ and $\varphi(1) \in \Z_p$, we have $\varphi(x) \varphi(1)=\tau (\varphi(x))\varphi(1) \varphi(x)$. We obtain then the equality $\varphi(\tau(x)) = \tau (\varphi(x))\varphi(1)$. This implies $\varphi_{|\Z_p} \tau(x)=\tau (\varphi_{|F}(x)) \varphi_{|\Z_p}$ in $\Aut \Z_p$. Since $\Aut Z_p$ is commutative, we obtain $\tau = \tau  \varphi_{|F}$.

Reciprocally, let $(f,g) \in \Aut \Z_p \times \Aut F$ such that $\tau g=\tau$. We define a map $\varphi$ from $\Z_p \times  F$ to $\Z_p \times F$ by $\varphi(i,x)=(f(i),g(x))$. Now $\varphi$ is an automorphism of $\Z_p \rtimes_{\tau} F$ if and only if $\varphi((i,x)(j,y))=\varphi((i,x))\varphi((j,y))$, equivalently $(f(i+\tau(x)j),g(xy)))=(f(i),g(x))(f(j),g(y))=(f(i)+\tau(g(x))f(j),g(x)g(y))$. Since $g$ is an automorphism, the two second components coincide. Since $f$ is an automorphism, the equality of the first components is equivalent to $f( \tau(x)j)=\tau(g(x))f(j)$ for all $j$, equivalently $f\tau(x)=\tau(g(x))f$, for all $x \in F$, which is fulfilled, since $\tau g=\tau$ and $\Aut \Z_p$ is commutative.
\end{proof}

\begin{corollary}\label{autf}
For $G=\Z_p \rtimes_{\tau}F$, we have $|\Aut G|=p(p-1)|S_0|$, where $S_0=\{ g \in \Aut F \, | \, \tau g=\tau \}$.
\end{corollary}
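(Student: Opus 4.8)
The plan is to assemble the statement from the two facts established immediately before it: the counting identity $|\Aut G| = p\,|S|$, and the explicit description of $S$ furnished by Proposition \ref{prop}. Throughout I keep the standing hypothesis of that discussion, namely that the semidirect product $\Z_p \rtimes_\tau F$ is \emph{not} direct; this is exactly what guarantees that $F$ has precisely $p$ conjugates in $G$, so that $\{C_i\}_{0\le i\le p-1}$ is a transversal of $S$ in $\Aut G$ and hence $|\Aut G| = p\,|S|$.

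First I would invoke Proposition \ref{prop}: the map $\varphi \mapsto (\varphi_{|\Z_p}, \varphi_{|F})$ is an injection of $S$ into $\Aut \Z_p \times \Aut F$ whose image is exactly the set of pairs $(f,g)$ with $\tau g = \tau$. The key observation is that this constraint involves only the second coordinate $g$ and leaves $f$ entirely unrestricted, so the image is literally a direct product $\Aut \Z_p \times S_0$, where $S_0 = \{g \in \Aut F \mid \tau g = \tau\}$. Consequently $|S| = |\Aut \Z_p|\cdot|S_0| = (p-1)\,|S_0|$, using $|\Aut \Z_p| = p-1$.

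Substituting this into $|\Aut G| = p\,|S|$ yields $|\Aut G| = p(p-1)\,|S_0|$, which is the assertion.

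I do not anticipate any real obstacle: the substantive work already lives in Proposition \ref{prop} and in the paragraph preceding the corollary, and the corollary itself is a short bookkeeping step. The one point deserving a word of care is the non-direct hypothesis: in the direct case $G = \Z_p \times F$ one has instead $\Aut G = \Aut \Z_p \times \Aut F$ of order $(p-1)\,|\Aut F|$, so the factor $p$ is absent and the displayed formula must not be applied there; I would mention this explicitly so the scope of the corollary is unambiguous.
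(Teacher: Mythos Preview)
Your argument is correct and matches the paper's proof essentially verbatim: the paper also deduces $S \simeq \Aut \Z_p \times S_0$ from Proposition~\ref{prop} and then multiplies $|S|=(p-1)|S_0|$ by the factor $p$ coming from the transversal $\{C_i\}$. Your added remark about the non-direct hypothesis is a welcome clarification that the paper leaves implicit.
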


\begin{proof}
From the proposition we obtain clearly $S=\Aut \Z_p \times S_0$,  hence $|\Aut G|=p|S|=p(p-1)|S_0|.$
\end{proof}

\section{Braces of size $12p$: direct products}

There are five groups of order 12, up to isomorphism, two abelian ones $C_{12}$ and $C_6\times C_2$ and three non-abelian ones, the alternating group $A_4$, the dihedral group $D_{2\cdot 6}$ and the dicyclic group $\Dic_{12}$. By computation with Magma, we obtain that the number of left braces with additive group $E$ and multiplicative group $F$ is as shown in the following table.

\vspace{0.5cm}
\begin{center}
\begin{tabular}{|c||c|c|c|c|c|}
\hline
$E \backslash F$ & \, \, $C_{12}$ \, \, & $C_6\times C_2$ & \, \, $A_4$ \, \, \, & \, \, $D_{2\cdot 6}$ \, \, & \,  $\Dic_{12}$ \, \, \\
\hline
\hline
$C_{12}$ &1&1&0&2& 1\\
\hline
$C_6\times C_2$ &1&1&1&1& 1\\
\hline
\end{tabular}
\end{center}

\vspace{0.5cm}
For $p$ a prime number, $p \geq 7$, the Sylow theorems give that a group $G$ of order $12p$ has a normal subgroup $H_p$ of order $p$. We obtain then the following corollary to Proposition \ref{braceprod}.

\begin{corollary}\label{corbraceprod} Let $p\geq 7$ be a prime.
Every left brace of size $12p$ is a direct or semidirect product of the trivial brace of size $p$ and a left brace  of size $12$.
\end{corollary}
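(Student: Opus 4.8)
The plan is to obtain the statement as an immediate instance of Proposition~\ref{braceprod}: one only has to check that the pair $(p,12)$ satisfies hypothesis (H), i.e.\ that $p\nmid 12$ and that every group of order $12p$ has a normal subgroup of order $p$. Granting these two facts, Proposition~\ref{braceprod} applied with $n=12$ yields exactly the assertion that every left brace of size $12p$ is a direct or semidirect product of the trivial brace of size $p$ and a left brace of size $12$. Moreover the second fact has already been recorded in the sentence preceding the statement, so in the written proof this step may simply be quoted; I nonetheless indicate below how I would justify it from scratch.

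The condition $p\nmid 12$ is trivial, since a prime $p\geq 7$ lies outside $\{2,3\}$. For the normality of the Sylow $p$-subgroup, let $G$ be a group of order $12p$ and let $n_p$ be its number of Sylow $p$-subgroups; Sylow's theorems give $n_p\mid 12$ and $n_p\equiv 1\pmod p$. For $p=7$ and for every prime $p\geq 13$ the only divisor of $12$ congruent to $1$ modulo $p$ is $1$ itself (the next candidate $p+1$ already exceeds $12$), so $n_p=1$ and the Sylow $p$-subgroup is normal. The single borderline case is $p=11$, where a priori $n_{11}\in\{1,12\}$.

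To rule out $n_{11}=12$ I would argue as follows. If $n_{11}=12$ then $|N_G(P_{11})|=11$, so $N_G(P_{11})=P_{11}$, and since $P_{11}$ is cyclic, hence abelian, also $C_G(P_{11})=P_{11}$; by Burnside's normal $p$-complement theorem $G=K\rtimes P_{11}$ for a subgroup $K$ of order $12$, the semidirect product being described by a homomorphism $P_{11}\to\Aut(K)$. Since $|\Aut(K)|\in\{4,12,24\}$ for a group $K$ of order $12$ and none of these is divisible by $11$, this homomorphism is trivial, so $P_{11}$ is normal in $G$, contradicting $n_{11}=12$. (Alternatively one can count: twelve Sylow $11$-subgroups occupy $120$ elements of order $11$, leaving only $11$ non-trivial elements to house all Sylow $2$- and Sylow $3$-subgroups, and inspecting the admissible values of $n_2$ and $n_3$ forces a normal subgroup of order $33$ or $44$, whose characteristic subgroup of order $11$ is then normal in $G$.) With both conditions of (H) verified for $n=12$, the corollary follows from Proposition~\ref{braceprod}. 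The only non-routine point is this exclusion of $n_{11}=12$; everything else is a direct appeal to the earlier proposition.
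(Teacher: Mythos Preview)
Your proposal is correct and follows the same approach as the paper: verify that hypothesis~(H) holds for $n=12$ and $p\geq 7$, then invoke Proposition~\ref{braceprod}. The paper's own justification is terser---it simply asserts that ``the Sylow theorems give that a group $G$ of order $12p$ has a normal subgroup $H_p$ of order $p$'' and immediately states the corollary---whereas you have additionally spelled out the borderline case $p=11$ via Burnside's transfer theorem, which is a welcome level of care but not a different strategy.
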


From the description of the braces of size 12 and the definition of direct product of braces we obtain the following result.

\begin{proposition}\label{direct} For a prime number $p$, there are 10 left braces of size $12p$ which are direct product of the unique brace of size $p$ and a brace of size $12$.
\end{proposition}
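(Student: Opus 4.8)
The plan is to match the ten isomorphism classes of left braces of size $12$ with ten pairwise non-isomorphic braces of size $12p$ obtained by taking direct products with the trivial brace of size $p$. First I would observe that each entry of the table counts isomorphism classes of left braces of size $12$ with the indicated additive and multiplicative groups, and that braces with non-isomorphic additive groups are non-isomorphic; hence summing the two rows ($1+1+0+2+1=5$ with additive group $C_{12}$, and $1+1+1+1+1=5$ with additive group $C_6\times C_2$) gives exactly $10$ isomorphism classes of left braces of size $12$. By the definition of direct product of braces recalled in the introduction, for each such $B'$ the brace $\Z_p\times B'$, where $\Z_p$ denotes the unique (trivial) brace of size $p$, has size $12p$, and conversely every direct product of the brace of size $p$ with a brace of size $12$ is of this shape. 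Since $B'_1\cong B'_2$ plainly forces $\Z_p\times B'_1\cong\Z_p\times B'_2$, the assignment $[B']\mapsto[\Z_p\times B']$ on isomorphism classes is well defined, and the number we want equals the size of its image; so it suffices to prove this assignment injective.

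For injectivity I would argue as follows. Suppose $\varphi\colon\Z_p\times B'_1\to\Z_p\times B'_2$ is a brace isomorphism, and let $N_i$ be the additive group of $\Z_p\times B'_i$. Since $p$ is a prime not dividing $12$, the abelian group $N_i$ of order $12p$ has a \emph{unique} subgroup $P_i$ of order $p$, namely the first direct factor $\Z_p\times\{0\}$; because $\varphi$ is in particular an isomorphism of additive groups, $\varphi(P_1)=P_2$. I would then check that $P_i$ is an ideal of the brace $\Z_p\times B'_i$: it is an additive subgroup by construction; using the standard fact that the additive and multiplicative identities of a brace coincide, $P_i$ is also the first factor of the multiplicative group of $\Z_p\times B'_i$, hence a normal subgroup of it; and a one-line computation with the direct-product operations shows $bs-b\in P_i$ for all $b\in\Z_p\times B'_i$ and $s\in P_i$, which is the remaining condition for an ideal. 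As $P_i$ is an ideal with $(\Z_p\times B'_i)/P_i\cong B'_i$, the relation $\varphi(P_1)=P_2$ lets $\varphi$ descend to a brace isomorphism $B'_1\cong B'_2$. Together with the first paragraph this yields precisely $10$ left braces of size $12p$ of the required form.

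None of this is deep, so the ``main obstacle'' is merely getting the one structural point right: verifying that $P_i$ is genuinely a brace ideal (which rests on $0=1$ in a brace) so that the quotient $(\Z_p\times B'_i)/P_i$ is defined and visibly isomorphic to $B'_i$. I expect this ideal/quotient formulation to be the cleanest to write up; alternatively one could avoid quotients by noting that $\{0\}\times B'_i$ is the unique subbrace of $\Z_p\times B'_i$ whose underlying set is the $12$-torsion subgroup of $N_i$, hence preserved by any brace isomorphism. (For $p\in\{2,3\}$ the uniqueness of $P_i$ fails, and the statement in that range, if it is needed there, is checked directly by computation.)
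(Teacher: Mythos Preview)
Your argument is correct. The paper's own justification is essentially the single sentence preceding the proposition (``From the description of the braces of size 12 and the definition of direct product of braces we obtain the following result''), so there is no detailed proof to compare with; the implicit reasoning is that the ten isomorphism classes of braces of size $12$ listed in the table yield ten direct products, and their pairwise non-isomorphism is covered by the bijection of Proposition~\ref{determsemid} (the direct products are precisely the pairs $(F,\tau)$ with $\tau$ trivial, one for each conjugacy class $F$).

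Your route is slightly different in that, instead of invoking that classification, you recover $B'$ intrinsically from $\Z_p\times B'$ via the unique additive $p$-subgroup, check it is a brace ideal, and pass to the quotient. This is a clean self-contained argument that does not rely on hypothesis~(H) or on Proposition~\ref{determsemid}; in particular it works for every prime $p\neq 2,3$ (including $p=5$), whereas the paper's framework is only set up for $p\geq 7$. Your alternative via the $12$-torsion sub-brace is equally valid and arguably even shorter. The only caveat, which you already flag, is that for $p\in\{2,3\}$ neither argument applies and one would need a direct check; the paper's statement is presumably meant in the ambient context $p\geq 7$.
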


\section{Braces of size $12p$: semidirect products}\label{braces}

For $p\geq 7$ and $n=12$, the hypothesis of Proposition \ref{determsemid} are satisfied and we shall apply it to determine the braces of size $12p$ which are semidirect products of the unique brace of size $p$ and a brace of size 12. To this end, we shall consider the braces of order 12 with additive group $E$ and multiplicative group $F$ and determine the classes of the morphisms $\tau:F\to \Aut(\Z_p)$ under the relation described in Proposition \ref{determsemid}. We note that finding all such morphisms $\tau$ reduces to consider the normal subgroups $F'$ of $F$ such that $F/F'$ is a cyclic group $C$ whose order divides $p-1$ and taking into account the automorphisms of $C$. From now on, the kernel of $\tau$ will be referred to as the kernel of the brace (or conjugation class of regular subgroups) determined by the pair $(F,\tau)$.

\begin{remark}[Description of the holomorphs]\label{hol}
We consider now the abelian groups of order 12, that is, $E=C_{12}$ and $E=C_6\times C_2$ and describe $\Hol(E)$ in each case.

For $E=C_{12}=\Z_{12}$, we have $\Aut(\Z_{12})=\Z_{12}^*=\{1,5,7,11\}\simeq C_2\times C_2$ and $\Hol(\Z_{12})=\{ (x,l) \, : \, x \in \Z_{12}, l \in \Z_{12}^* \}$ with product given by $(x,l)(y,m)=(x+ly,lm)$.

For $E=C_6\times C_2$, we have $\Aut(E) \simeq D_{2\cdot 6}$. We write $C_6\times C_2=\langle a \rangle \times \langle b \rangle$ and consider the automorphisms $\rho,\sigma$ of $E$ defined by

$$\begin{array}{llll} \rho:& a& \mapsto & a^5 b \\ &b&\mapsto &a^3 \end{array} \qquad \begin{array}{llll} \sigma:& a& \mapsto & a^5 \\ &b&\mapsto &a^3b \end{array}.$$

\noindent
We may check that $\rho$ has order 6, $\sigma$ has order 2 and $\sigma \rho \sigma=\rho^{-1}$, hence $\Aut(E)=\langle \rho,\sigma \rangle$.
We have $\Hol(E)=\{ (x,\varphi) \, : \, x \in E, \varphi \in \Aut E \}$ with product defined by $(x,\varphi)(y,\psi)=(x\varphi(y),\varphi\psi)$.

We shall use the descriptions above along this section.
\end{remark}

\subsection{$F=C_{12}$}

Let us write $F=\langle x \rangle$. We determine now the possible morphisms $\tau:F \rightarrow \Z_p^*$. To be used in Section \ref{HG12p}, we compute $S_0(\tau)=\{ g \in \Aut F \, | \, \tau g=\tau \}$. We have $\Aut C_{12} \simeq \Z_{12}^* =\{ 1,5,7,11 \}$.

\begin{enumerate}[1)]
\item There is a unique morphism $\tau:F \rightarrow \Z_p^*$ with kernel of order 6, namely the one sending the generator $x$ of $F$ to $-1$. We have $S_0(\tau)=\Aut F$.
\item When $p\equiv 1 \pmod 4$, $\Z_p^*$ has a (unique) subgroup of order 4. Let $\zeta_4$ be a generator of it. We may define two morphisms from $F$ to $\Z_p^*$ with a kernel of order 3, namely

    $$\tau_1: x \mapsto \zeta_4, \quad \tau_2: x \mapsto \zeta_4^{-1}.$$

\noindent
We have $S_0(\tau_1)=S_0(\tau_2)=\{ 1,5 \}$.

\item When $p\equiv 1 \pmod 6$, $\Z_p^*$ has a (unique) subgroup of order 6. Let $\zeta_6$ be a generator of it. We may define two morphisms from $F$ to $\Z_p^*$ with a kernel of order 2, namely

    $$\tau_1: x \mapsto \zeta_6, \quad \tau_2: x \mapsto \zeta_6^{-1}$$

and two morphisms from $F$ to $\Z_p^*$ with a kernel of order 4, namely

    $$\tau_3: x \mapsto \zeta_6^2, \quad \tau_4: x \mapsto \zeta_6^{-2}.$$

\noindent
We have $S_0(\tau_1)=S_0(\tau_2)=S_0(\tau_3)=S_0(\tau_4)=\{1,7\}$.

\item When $p\equiv 1 \pmod {12}$, $\Z_p^*$ has a (unique) subgroup of order 12. Let $\zeta_{12}$ be a generator of it. We may define four morphisms from $F$ to $\Z_p^*$ with a trivial kernel, namely

    $$\begin{array}{lll} \tau_1: x \mapsto \zeta_{12}, && \tau_2: x \mapsto \zeta_{12}^{5},\\[10pt]
       \tau_3: x \mapsto \zeta_{12}^{-5}, && \tau_4: x \mapsto \zeta_{12}^{-1}.\end{array}$$

\noindent
We have $S_0(\tau_1)=S_0(\tau_2)=S_0(\tau_3)=S_0(\tau_4)=\{ 1 \}$.

\end{enumerate}

\subsubsection*{Case $E=C_{12}$}

If $E=C_{12}$, we may take $F=\langle (1,1) \rangle \subset \Hol(E)$, i.e. we have now $x=(1,1)$. We determine the conjugation relations between the  morphisms $\tau:F \rightarrow \Z_p^*$.

\begin{enumerate}[1)]
\item We consider the two morphisms from $F$ to $\Z_p^*$ with a kernel of order 3.
We observe that $\tau_2(-1,1)=\tau_2((1,1)^{-1})=\zeta_4$, hence $\tau_1=\tau_2 \Phi_{-1}$ and we obtain then one brace.
\item We consider the two morphisms from $F$ to $\Z_p^*$ with a kernel of order 2 and the two with a kernel of order 4.
We have $\tau_1=\tau_2 \Phi_{-1}$ and $\tau_3=\tau_4 \Phi_{-1}$ and obtain then two braces.
\item We consider the four morphisms from $F$ to $\Z_p^*$ with a trivial kernel.
We observe that $(1,1)^5=(5,1), (1,1)^{-5}=(-5,1), (1,1)^{-1}=(-1,1)$, hence $\tau_1=\tau_2 \Phi_{5}=\tau_3 \Phi_{-5}= \tau_4 \Phi_{-1}$ and obtain then one brace.
\end{enumerate}

We state the obtained result in the following proposition.

\begin{proposition}
 Let $p\geq 7$ be a prime number. We count the left braces with additive group
     $\Z_p\times \Z_{12}$ and multiplicative group $\Z_p\rtimes \Z_{12}$.
 \begin{enumerate}[1)]
     \item If $p\equiv 11 \pmod{12}$ there are 2 such braces.  One of them is a direct product and the second one has a kernel of order 6.
      \item If $p\equiv 5 \pmod{12}$ there are 3 such braces.  Two of them are as in 1) and the third one has a kernel of order 3.
       \item If $p\equiv 7 \pmod{12}$ there are 4 such braces. Two of them are as in 1) and the other two have kernels of orders 2 and 4, respectively.
            \item If $p\equiv 1 \pmod{12}$ there are 6 such braces. One of them is a direct product and the other five have kernels of orders 6,4,3,2,1, respectively.

 \end{enumerate}
\end{proposition}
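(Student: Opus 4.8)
The plan is to read off the count from Proposition~\ref{determsemid}, applied with $E=\Z_{12}$ and $N=\Z_p\times\Z_{12}$. That proposition tells us that the conjugacy classes of regular subgroups of $\Hol(N)$ whose multiplicative group has the form $\Z_p\rtimes_\tau\Z_{12}$ are parametrised by pairs $(F,\tau)$, with $F$ ranging over representatives of the conjugacy classes of regular subgroups of $\Hol(\Z_{12})$ isomorphic to $C_{12}$ and $\tau\colon F\to\Z_p^*$ a group morphism, taken modulo the equivalence $\tau\simeq\tau'\iff\tau=\tau'\circ\Phi_\nu|_F$ for $\nu\in\Aut(\Z_{12})=\Z_{12}^*$. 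The table of braces of size $12$ shows there is a unique brace with additive group $C_{12}$ and multiplicative group $C_{12}$, hence a unique conjugacy class of such $F$; I would fix the representative $F=\langle(1,1)\rangle\subseteq\Hol(\Z_{12})$ used above. The whole proof then reduces to counting the $\Z_{12}^*$-orbits of group morphisms $C_{12}\to\Z_p^*$ under this equivalence.

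First I would list the morphisms. Such a $\tau$ is determined by the image of a generator $x$, an element of $\Z_p^*$ whose order divides $\gcd(12,p-1)$; equivalently by its kernel $F'\le C_{12}$ (with $C_{12}/F'$ cyclic of order dividing $p-1$) together with an embedding $C_{12}/F'\hookrightarrow\Z_p^*$. Since $p\ge 7$ is prime, $p$ is coprime to $6$ and $\gcd(12,p-1)$ equals $2,4,6,12$ according as $p\equiv 11,5,7,1\pmod{12}$. This gives: the trivial morphism (kernel of order $12$, always present, responsible for the direct product) and the morphism $x\mapsto-1$ (kernel of order $6$, always present); two morphisms with kernel of order $3$ when $4\mid p-1$; two with kernel of order $2$ and two with kernel of order $4$ when $6\mid p-1$; and four with trivial kernel when $12\mid p-1$ --- precisely the list compiled above.

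Next I would compute the equivalence. From $\Phi_\nu(a,\sigma)=(\nu(a),\nu\sigma\nu^{-1})$ and the choice $F=\langle(1,1)\rangle$, conjugation by $\nu\in\Z_{12}^*$ sends the generator $(1,1)$ to $(\nu,1)=(1,1)^\nu$, so $\Phi_\nu|_F$ is the $\nu$-th power map on $C_{12}$ and $\tau\simeq\tau'$ iff $\tau'(x)=\tau(x)^\nu$ for some $\nu\in\{1,5,7,11\}$. Since $\{1,5,7,11\}$ collapses to $\{\pm1\}$ modulo $4$ and modulo $6$, the two morphisms with kernel of order $3$ fuse into a single class, as do the two with kernel of order $2$ and the two with kernel of order $4$; since $\{1,5,7,11\}$ is all of $\Z_{12}^*$, the four morphisms with trivial kernel fuse into a single class; and the trivial morphism and the one with kernel of order $6$ are each alone in their class. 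Equivalent morphisms have $\Aut(E)$-conjugate kernels, in particular kernels of equal order, so there is no fusion across distinct kernel orders and the count is complete: $2$ classes always, $+1$ when $4\mid p-1$, $+2$ when $6\mid p-1$, $+1$ when $12\mid p-1$. Translating into the residue of $p$ modulo $12$ yields the four cases of the statement, with the indicated kernel orders.

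I do not expect a real obstacle: granting Proposition~\ref{determsemid}, the argument is bookkeeping. The only steps that want a little care are the identification of $\Phi_\nu|_F$ with the power map --- which uses the explicit generator $(1,1)$ of $F$ and the triviality of its $\Aut(\Z_{12})$-component --- and the verification that the orbit list is exhaustive; both are routine.
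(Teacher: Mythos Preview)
Your proposal is correct and follows essentially the same approach as the paper: both apply Proposition~\ref{determsemid} with the representative $F=\langle(1,1)\rangle\subset\Hol(\Z_{12})$, list the morphisms $\tau\colon F\to\Z_p^*$ by kernel order, and then determine which are related under conjugation by $\Aut(\Z_{12})$. Your observation that $\Phi_\nu|_F$ is the $\nu$-th power map (because the generator $(1,1)$ has trivial automorphism component) packages the paper's case-by-case checks $\tau_1=\tau_2\Phi_{-1}$, $\tau_1=\tau_2\Phi_5=\tau_3\Phi_{-5}=\tau_4\Phi_{-1}$, etc., into a single uniform statement, but the content is identical.
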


\subsubsection*{Case $E=C_6\times C_2$}

 For $E=C_6\times C_2$, we use the notations in Remark \ref{hol}. We may take $F=\langle (ab,\varphi) \rangle \subset \Hol(E)$, where $\varphi$ is the order 2 automorphism defined by $\varphi(a)=a, \varphi(b)=a^3b$, i.e. $\varphi=\rho^3\sigma$. We may check that $F$ is indeed a cyclic group of order $12$ and a regular subgroup of $\Hol(E)$. We have now $x=(ab,\varphi)$. We determine the conjugation relations between the morphisms $\tau:F \rightarrow \Z_p^*$.

\begin{enumerate}[1)]
\item For the two morphisms from $F$ to $\Z_p^*$ with a kernel of order 3, we observe that $(ab,\varphi)^{-1}=(\varphi(a^{-1}b),\varphi)=(a^2b,\varphi)$, hence $\tau_2(a^2b,\varphi)=\zeta_4$. We have then  $\tau_1=\tau_2 \Phi_{\sigma}$, since $\Phi_{\sigma}(ab,\rho^3\sigma)=\sigma (ab,\rho^3\sigma) \sigma^{-1}=(\sigma(ab),\sigma(\rho^3\sigma)\sigma)=(a^2b,\rho^3 \sigma)$. We obtain then one brace.
\item For the two morphisms from $F$ to $\Z_p^*$ with a kernel of order 2 and the two with a kernel of order 4, as in the preceding case, we have $\tau_1=\tau_2 \Phi_{\sigma}$ and $\tau_3=\tau_4 \Phi_{\sigma}$ and obtain then two braces.
\item For the four morphisms from $F$ to $\Z_p^*$ with a trivial kernel, we observe that $(ab,\varphi)^5=(a^5b,\varphi), (ab,\varphi)^{-5}=(a^4b,\varphi), (ab,\varphi)^{-1}=(a^2b,\varphi)$, hence $\tau_1=\tau_2 \Phi_{\rho^3}=\tau_3 \Phi_{\rho^3\sigma}= \tau_4 \Phi_{\sigma}$ and we obtain then one brace.
\end{enumerate}

We state the obtained result in the following proposition.

\begin{proposition}
 Let $p\geq 7$ be a prime number. We count the left braces with additive group
     $\Z_p\times \Z_{6}\times \Z_2$ and multiplicative group $\Z_p\rtimes \Z_{12}$.
 \begin{enumerate}[1)]
     \item If $p\equiv 11 \pmod{12}$ there are 2 such braces. One of them is a direct product and the second one has a kernel of order 6.
      \item If $p\equiv 5 \pmod{12}$ there are 3 such braces. Two of them are as in 1) and the third one has a kernel of order 3.
       \item If $p\equiv 7 \pmod{12}$ there are 4 such braces. Two of them are as in 1) and the other two have kernels of orders 2 and 4, respectively.
            \item If $p\equiv 1 \pmod{12}$ there are 6 such braces. One of them is a direct product and the other five have kernels of orders 6,4,3,2,1, respectively.

 \end{enumerate}
\end{proposition}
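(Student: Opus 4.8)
The plan is to read the statement directly off Proposition~\ref{determsemid}, feeding in the two pieces of data already assembled in this section: the Magma table of braces of size~$12$ and the conjugation computations just carried out for $F=\langle x\rangle$ with $x=(ab,\varphi)$, $\varphi=\rho^3\sigma$. Take $N=\Z_p\times E$ with $E=C_6\times C_2$. By Proposition~\ref{determsemid}, the left braces with additive group $N$ and multiplicative group $\Z_p\rtimes\Z_{12}$ correspond bijectively to couples $(F,\tau)$, where $F$ runs over representatives of the conjugacy classes of regular subgroups of $\Hol(E)$ isomorphic to $C_{12}$ and $\tau:F\to\Z_p^*$ runs over representatives of the classes under $\tau\simeq\tau'\iff\tau=\tau'\circ\Phi_\nu|_F$, $\nu\in\Aut(E)$. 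The table records exactly one brace with additive group $C_6\times C_2$ and multiplicative group $C_{12}$, so there is a single conjugacy class of such $F$; fixing the representative $F=\langle x\rangle$, the count sought equals the number of $\simeq$-classes of morphisms $\tau:F\to\Z_p^*$.

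First I would list the morphisms. Since $F\cong C_{12}$, a morphism $\tau$ is determined by $t:=\tau(x)\in\Z_p^*$ with $t^{12}=1$, and $|\ker\tau|=12/\ord(t)$. As $p\ge 7$ is prime, $p\bmod 12\in\{1,5,7,11\}$, so $\gcd(12,p-1)$ equals $2,4,6,12$ respectively, and the possible orders of $t$ are $\{1,2\}$, $\{1,2,4\}$, $\{1,2,3,6\}$ and $\{1,2,3,4,6,12\}$ in the four cases; this is the enumeration recalled at the start of the section.

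Next I would identify the equivalence. For $\tau\simeq\tau'$ one needs $\nu\in\Aut(E)$ with $\Phi_\nu(F)=F$ and $\tau=\tau'\circ(\Phi_\nu|_F)$, with $\Phi_\nu|_F\in\Aut(F)\cong\Z_{12}^*$; hence the $\simeq$-classes of morphisms are exactly the orbits of $\{t\in\Z_p^*:t^{12}=1\}$ under $t\mapsto t^k$, where $k$ runs over the image $H\le\Z_{12}^*$ of $\{\nu\in\Aut(E):\Phi_\nu(F)=F\}$. The conjugation relations computed above show that conjugation by $\sigma$ induces inversion on $F$ and conjugation by $\rho^3$ induces the fifth-power map; these generate $\Z_{12}^*$, so $H=\Z_{12}^*$ (as in the case $E=C_{12}$).

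Finally I would count orbits. The elements $t=1$ and $t=-1$ are fixed, giving respectively the direct product and the brace of kernel order $6$. For each $m\mid 12$ with $m\ge 3$, the $\phi(m)$ elements of order $m$ in $\Z_p^*$ (present iff $m\mid p-1$) form one orbit, because $\Z_{12}^*\to(\Z/m\Z)^*$ is surjective and $(\Z/m\Z)^*$ permutes the primitive $m$-th roots of unity transitively; this orbit yields a single brace of kernel order $12/m$. Summing over the admissible $m$ in each case gives $2,3,4,6$ braces with the stated kernel orders, which is the proposition. The one non-formal ingredient is the identification $H=\Z_{12}^*$; granted the two explicit conjugations above it is immediate, and the remainder is the elementary arithmetic of roots of unity in $\Z_p^*$.
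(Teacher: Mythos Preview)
Your proof is correct and follows essentially the same route as the paper: both invoke Proposition~\ref{determsemid}, use the single conjugacy class of regular $C_{12}$'s in $\Hol(C_6\times C_2)$, and rely on the explicit conjugations by $\sigma$ and $\rho^3$ computed just before the proposition. The only difference is presentational---the paper verifies the conjugation relations case by case (kernel order $3$, then $2$ and $4$, then $1$), whereas you package these into the single observation that the normalizer of $F$ in $\Aut(E)$ surjects onto $\Z_{12}^*$ and then count orbits uniformly; this is a mild streamlining rather than a different argument.
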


\subsection{$F=C_6\times C_2$}

Let us write $F=\langle x, y \rangle$, with $x$ of order 6, $y$ of order 2. We determine now the possible morphisms $\tau:F \rightarrow \Z_p^*$. To be used in Section \ref{HG12p}, we compute $S_0(\tau)=\{ g \in \Aut F \, | \, \tau g=\tau \}$. We use the determination of $\Aut F$ given in Remark \ref{hol}.

\begin{enumerate}[1)]
\item There are three morphisms from $F$ to $\Z_p^*$ with kernel of order 6, namely

$$\begin{array}{lllr} \tau_1:&x & \mapsto & 1 \\ & y & \mapsto & -1 \end{array}, \quad \begin{array}{lllr} \tau_2:&x & \mapsto & -1 \\ & y & \mapsto & -1 \end{array}, \quad \begin{array}{lllr} \tau_3:&x & \mapsto & -1 \\ & y & \mapsto & 1 \end{array},$$

\noindent
with kernels $\langle x \rangle, \langle xy \rangle, \langle x^2y \rangle$, respectively. We have $S_0(\tau_1)= \langle \rho^3,\sigma \rangle, S_0(\tau_2)= \langle \rho^3,\rho^2\sigma \rangle, S_0(\tau_3)= \langle \rho^3,\rho\sigma \rangle$.

\item In order to have a morphism $\tau$ with $\Ker \tau$ of order 2 or 4, it is necessary that $p\equiv 1 \pmod 6$. In this case, let $\zeta_6$ be a generator of the unique subgroup of order 6 of $\Z_p^*$ . We may define six morphisms from $F$ to $\Z_p^*$ with a kernel of order 2, namely

$$\begin{array}{rlrll }
   \tau_1: &  x \mapsto \zeta_6  \qquad \qquad \qquad & \tau_2: & x \mapsto \zeta_6^{-1} \qquad \qquad &\textrm{ with } \Ker \tau=<y> \\[2mm]
           &y \mapsto 1 &    & y \mapsto 1,
    \end{array}$$
$$\begin{array}{rlrll}
   \tau_3: &  x \mapsto \zeta_6^2  \qquad \qquad \qquad & \tau_4: & x \mapsto \zeta_6^{-2} \qquad \qquad &\textrm{ with } \Ker \tau=<x^3>\\[2mm]
           &y \mapsto \zeta_6^3  &    & y \mapsto \zeta_6^3, &
    \end{array}$$
 $$\begin{array}{rlrll}
   \tau_5: &  x \mapsto \zeta_6  \qquad \qquad \qquad & \tau_6: & x \mapsto \zeta_6^{-1} \qquad \qquad & \textrm{ with } \Ker \tau=<x^3 y>\\[2mm]
           &y \mapsto \zeta_6^3 &    & y \mapsto \zeta_6^3. &
    \end{array}$$

\noindent
We have $S_0(\tau_1)=S_0(\tau_2)=\langle \rho \sigma \rangle, S_0(\tau_3)=S_0(\tau_4)=\langle \rho^3 \sigma \rangle, S_0(\tau_5)=S_0(\tau_6)=\langle \rho^5 \sigma \rangle$. We may further define two morphisms from $F$ to $\Z_p^*$ with a kernel of order 4, namely
       $$\begin{array}{rlrl}
   \tau_1: & x \mapsto \zeta_6^2  \qquad \qquad \qquad & \tau_2: & x \mapsto \zeta_6^{-2}\\[2mm]
           &y \mapsto 1 &    & y \mapsto 1.
    \end{array}$$

\noindent
We have $S_0(\tau_1)=S_0(\tau_2)=\langle \rho^2, \rho \sigma \rangle$.

\end{enumerate}

\subsubsection*{Case $E=C_{12}$}

We know that in $\Hol(C_{12})$ there is only a regular subgroup isomorph to $F$. We may take

$$F=\langle \alpha=(2,1),\beta=(3,7) \rangle \subset \Hol(E)$$

\noindent
following the notation in Remark \ref{hol}.

The element $\alpha$ has order 6, the element $\beta$ has order 2, they commute with each other and generate a regular subgroup of order 12. We have now $x=\alpha, y=\beta$.
We determine the conjugation relations between the morphisms $\tau:F \rightarrow \Z_p^*$.

\begin{enumerate}[1)]
\item For the morphisms from $F$ to $\Z_p^*$ with kernel of order 6, we have $\tau_2=\tau_3 \Phi_{-1}$ and $\tau_1$ is not conjugated to the other two, since the second component of $\alpha$ is different from those of $\alpha \beta$ and $\alpha^2 \beta$. We obtain then two braces.

\item For the morphisms from $F$ to $\Z_p^*$ with a kernel of order 4, we observe that $\tau_2\Phi_{11}(\alpha)=\zeta_3$ and $\tau_2\Phi_{11}(\beta)=1$, hence $\tau_1=\tau_2 \Phi_{11}$ and we obtain then a unique brace.

\item For the morphisms from $F$ to $\Z_p^*$ with a kernel of order 2, we observe that $\tau_2=\tau_1 \Phi_{5}$, $\tau_5=\tau_1 \Phi_{7}$, $\tau_6=\tau_1 \Phi_{-1}$ and  $\tau_4=\tau_3 \Phi_{-1}$. So we obtain only two braces (determined by $\tau_1$ and $\tau_3$).

\end{enumerate}

We state the obtained result in the following proposition.

\begin{proposition}
 Let $p\geq 7$ be a prime number. We count the left braces with additive group
     $\Z_p\times C_{12}$ and multiplicative group $\Z_p\rtimes (C_{6}\times C_2)$.
 \begin{enumerate}[1)]
     \item If $p\equiv 11 \pmod{12}$ there are 3 such braces. One of them is a direct product and the other two have a kernel of order 6.
     \item If $p\equiv 7 \pmod{12}$ there are 6 such braces. One of them is a direct product, two have kernel of order 6, two have kernels of order 2 and one has kernel of order 4.
   \item If $p\equiv 5 \pmod{12}$ there are 3 such braces. One of them is a direct product and the other two have a kernel of order 6.
  \item If $p\equiv 1 \pmod{12}$  there are 6 such braces. One of them is a direct product, two have kernel of order 6, two  have kernels of orders 2 and one  has kernel of order 4.

 \end{enumerate}
\end{proposition}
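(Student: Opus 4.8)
The plan is to derive the statement directly from Proposition \ref{determsemid} and the morphism computations made just above for the case $F = C_6\times C_2$, $E = C_{12}$, organizing the bookkeeping by the residue of $p$ modulo $12$.

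First I would fix the correspondence. Combining Bachiller's correspondence with \cite{BNY} Lemma 2.1, isomorphism classes of left braces with additive group $N = \Z_p\times C_{12}$ are in bijection with conjugacy classes of regular subgroups of $\Hol(N)$; by Propositions \ref{braceprod} and \ref{determsemid}, those whose multiplicative group is a (possibly trivial) semidirect product $\Z_p\rtimes(C_6\times C_2)$ correspond bijectively to pairs $(F,\tau)$, where $F$ is the regular subgroup of $\Hol(C_{12})$ isomorphic to $C_6\times C_2$ --- unique up to conjugacy, as recalled above --- and $\tau\colon F\to\Z_p^*$ is a group morphism, two pairs being equivalent iff $\tau = \tau'\circ\Phi_\nu|_F$ for some $\nu\in\Aut(C_{12})$. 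Taking the representative $F = \langle(2,1),(3,7)\rangle$ of Remark \ref{hol}, I would note that each $\nu\in\Aut(C_{12}) = \{1,5,7,11\}$ fixes $F$ setwise, so the equivalence relation on morphisms is simply precomposition by the four automorphisms $\Phi_\nu|_F$ of $F$.

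Next I would enumerate the morphisms and count equivalence classes. Since each $\Phi_\nu|_F$ is an automorphism of $F$, equivalent morphisms have kernels of the same order, so one may work class by class in $|\Ker\tau|$. Always present are the trivial morphism (yielding the direct product) and the three morphisms with $|\Ker\tau| = 6$ (those with image in $\{\pm1\}\subseteq\Z_p^*$); the remaining morphisms, of kernel order $2$ or $4$, have image of order $6$ or $3$, hence exist precisely when $3\mid p-1$, i.e. $p\equiv 1,7\pmod{12}$ --- here one uses that $C_6\times C_2$ has exponent $6$ and that its cyclic quotients of order greater than $2$ have order $3$ or $6$. The partition into equivalence classes is exactly what was computed above: the three kernel-$6$ morphisms give two classes (the one with kernel $\langle(2,1)\rangle$ being isolated because $\Phi_\nu$ preserves second components in $\Hol(C_{12})$), the six kernel-$2$ morphisms give two classes, and the two kernel-$4$ morphisms give one.

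Finally I would tabulate: for $p\equiv 5,11\pmod{12}$ the total is $1+2 = 3$ (one direct product and two braces of kernel order $6$), and for $p\equiv 1,7\pmod{12}$ it is $1+2+2+1 = 6$ (one direct product, two of kernel order $6$, two of kernel order $2$, one of kernel order $4$), which is the claim. The only points requiring an actual argument beyond collating the previous computations are the congruence dichotomy governing the existence of the kernel-$2$ and kernel-$4$ morphisms, and the verification that $\Aut(C_{12})$ stabilizes the chosen $F$ (so that the simplified form of the equivalence relation is legitimate); both are short, and the orbit computations themselves I would simply cite from the preceding subsection.
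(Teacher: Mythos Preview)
Your proposal is correct and follows essentially the same approach as the paper: the proposition is stated immediately after the computations in the subsection ``$F=C_6\times C_2$, Case $E=C_{12}$'', and your proof simply packages those computations together with the congruence conditions on $p$ that govern which morphisms exist. The one point you make slightly more explicit than the paper is that $\Aut(C_{12})$ stabilizes the chosen $F$ setwise; the paper instead infers this from the fact (stated at the outset of that case) that $\Hol(C_{12})$ has a \emph{unique} regular subgroup isomorphic to $C_6\times C_2$, which forces it to be normalized by $\Aut(C_{12})$.
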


\subsubsection*{Case $E=C_6\times C_2$}

If $E=C_6\times C_2$, we may take $F=\langle (a,\Id),(b,\Id) \rangle \subset \Hol(E)$, following the notation of Remark \ref{hol}. We may check that $F$ is indeed a regular subgroup of order $12$  of $\Hol(E)$ isomorph to $C_6\times C_2$. We have now $x=(a,\Id), y=(b,\Id)$. We determine the conjugation relations between the morphisms $\tau:F \rightarrow \Z_p^*$.

\begin{enumerate}[1)]
\item For the morphisms from $F$ to $\Z_p^*$ with kernel of order 6, we have $\tau_1=\tau_2 \Phi_{\rho^4}=\tau_3 \Phi_{\rho^5}$. We obtain then one brace.

\item For the morphisms from $F$ to $\Z_p^*$ with a kernel of order 4, we observe that  $\tau_1=\tau_2 \Phi_{\rho^3}$ and obtain then a unique brace.

\item For the morphisms from $F$ to $\Z_p^*$ with a kernel of order 2,
we observe that $\tau_6=\tau_1 \Phi_{\rho}=\tau_2 \Phi_{\rho^4}=\tau_3 \Phi_{\rho^2}=\tau_4 \Phi_{\sigma \rho^2}=\tau_5 \Phi_{\rho^3}$. So we obtain only one brace.

\end{enumerate}

We state the obtained result in the following proposition.

\begin{proposition}
 Let $p\geq 7$ be a prime number. We count the left braces with additive group
     $\Z_p\times (C_{6}\times C_2)$ and multiplicative group $\Z_p\rtimes (C_{6}\times C_2)$.
 \begin{enumerate}[1)]
     \item If $p\equiv 11 \pmod{12}$ there are 2 such braces. One of them is a direct product and the second one has a kernel of order 6.
     \item If $p\equiv 7 \pmod{12}$ there are 4 such braces. One of them is a direct product, and the other three have kernels of orders 2, 3 and 4, respectively.
   \item If $p\equiv 5 \pmod{12}$ there are 2 such braces. One of them is a direct product and the second one has a kernel of order 6.
  \item If $p\equiv 1 \pmod{12}$  there are 4 such braces. One of them is a direct product, and the other three have kernels of orders 2, 3 and 4, respectively.

 \end{enumerate}
\end{proposition}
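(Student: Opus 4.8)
The plan is to invoke Proposition~\ref{determsemid} for $n=12$; its hypotheses hold because $p\geq 7$, so $p\nmid 12$ and, by the Sylow theorems, every group of order $12p$ has a normal subgroup of order $p$. We take $N=\Z_p\times E$ with $E=C_6\times C_2$. By that proposition, the braces to be counted are in bijection with the equivalence classes of pairs $(F,\tau)$, where $F$ runs over representatives of the conjugacy classes of regular subgroups of $\Hol(E)$ isomorphic to $C_6\times C_2$ and $\tau$ runs over the group morphisms $F\to\Aut(\Z_p)\cong\Z_p^*$ modulo the relation $\tau\simeq\tau'\iff\tau=\tau'\circ\Phi_{\nu}|_F$ for some $\nu\in\Aut(E)$. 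By the bijection between isomorphism classes of braces with a given abelian additive group and conjugacy classes of regular subgroups of its holomorph (see \cite{B} and \cite{BNY}), together with the Magma count of braces of size $12$ recorded above, there is exactly one such conjugacy class; I would realize it by $F=\langle(a,\Id),(b,\Id)\rangle\subset\Hol(E)$ in the notation of Remark~\ref{hol}, i.e. the subgroup $\lambda(E)$ of left translations, which is automatically regular and isomorphic to $C_6\times C_2$.

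The first step is to enumerate the morphisms $\tau:F\to\Z_p^*$. As recalled at the beginning of Section~\ref{braces}, these are determined by a cyclic quotient of $F$ of order dividing $p-1$ together with an embedding of it into $\Z_p^*$. Since $F\cong C_6\times C_2$ has exponent $6$, its cyclic quotients have orders $1,2,3,6$; as $p$ is odd, one always has the trivial morphism (kernel of order $12$) and the three morphisms with kernel of order $6$ (image of order $2$), and, when $6\mid p-1$ --- which for $p\geq 7$ prime means $p\equiv 1$ or $7\pmod{12}$ --- one has in addition six morphisms with kernel of order $2$ (image of order $6$) and two with kernel of order $4$ (image of order $3$). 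This is exactly the list obtained in the subsection on $F=C_6\times C_2$ above, where the subgroups $S_0(\tau)=\{g\in\Aut F\mid\tau g=\tau\}$ were computed as well.

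The second step is to pass to the equivalence classes. Because $F=\lambda(E)$, conjugation in $\Hol(E)$ by $\nu\in\Aut(E)$ restricts on $F$ to the automorphism of $F$ corresponding to $\nu$ under the identification $F\cong E$; hence $\nu\mapsto\Phi_{\nu}|_F$ is an isomorphism $\Aut(E)\to\Aut(F)$, the relation of Proposition~\ref{determsemid} is precisely precomposition of $\tau$ by an arbitrary automorphism of $F$, and the class of $\tau$ has cardinality $|\Aut F|/|S_0(\tau)|=12/|S_0(\tau)|$. Evaluating the inner automorphisms $\Phi_{\nu}$ on the generators $x=(a,\Id)$ and $y=(b,\Id)$ of $F$ gives the identities recorded above for $E=C_6\times C_2$ --- $\tau_1=\tau_2\Phi_{\rho^4}=\tau_3\Phi_{\rho^5}$ for the kernels of order $6$, $\tau_1=\tau_2\Phi_{\rho^3}$ for the kernels of order $4$, and $\tau_6=\tau_1\Phi_{\rho}=\tau_2\Phi_{\rho^4}=\tau_3\Phi_{\rho^2}=\tau_4\Phi_{\sigma\rho^2}=\tau_5\Phi_{\rho^3}$ for the kernels of order $2$ --- showing that for each admissible kernel order all morphisms with that kernel order form a single equivalence class, while morphisms with kernels of different orders are inequivalent since $\Phi_{\nu}|_F$ is an automorphism of $F$ and so preserves $|\Ker\tau|$. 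Assembling this: for every $p$ one obtains the direct product of Proposition~\ref{direct} and one further brace whose kernel has order $6$, and when $p\equiv 1,7\pmod{12}$ one obtains in addition one brace of kernel order $4$ and one of kernel order $2$; hence $2$ braces for $p\equiv 5,11\pmod{12}$ and $4$ braces for $p\equiv 1,7\pmod{12}$, the non-direct-product braces being distinguished by the orders of their kernels.

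I expect the main obstacle to be the simultaneous completeness of both enumerations: one must be sure no morphism $\tau$ has been overlooked --- this is controlled by comparing the exponent of $F$ with the order of $\Z_p^*$ --- and, more delicately, that the displayed identities $\tau_i=\tau_j\Phi_{\nu}$ are correct and exhaust all identifications, which requires carrying out the explicit computations of the inner automorphisms $\Phi_{\nu}$ of $\Hol(C_6\times C_2)$ on the chosen generators of $F$, and is the point at which bookkeeping slips are easiest. The orbit-stabilizer count $12/|S_0(\tau)|$, checked against the previously computed subgroups $S_0(\tau)$, gives an independent verification that each admissible kernel order contributes exactly one brace.
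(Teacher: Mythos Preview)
Your argument is correct and follows the paper's approach exactly: apply Proposition~\ref{determsemid} with $F=\lambda(E)=\langle(a,\Id),(b,\Id)\rangle$, list the morphisms $\tau$ by kernel order, and exhibit the same conjugation identities $\tau_i=\tau_j\Phi_\nu$ that collapse each stratum to a single class. Your added remark that $\nu\mapsto\Phi_\nu|_F$ is an isomorphism $\Aut(E)\to\Aut(F)$ (because $F=\lambda(E)$), together with the orbit--stabilizer count $12/|S_0(\tau)|$, gives a slightly cleaner verification than the paper's bare listing of identities, and your kernel orders $6,4,2$ agree with the paper's proof (the ``$3$'' in the proposition's statement is evidently a misprint for ``$6$'', since $C_6\times C_2$ has no cyclic quotient of order~$4$).
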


\subsection{$F=A_4$}

This case only occurs for $E=C_6\times C_2$. We use the notation of Remark \ref{hol} for the generators of $\Hol(E)$. We have $A_4=V_4\rtimes C_3$ and we may take $F=\langle a^3, b, (a^4,\rho^2) \rangle \subset \Hol(E)$, since $a^3, b$ are order 2 elements commuting between them and $(a^4,\rho^2)$ has order 3 and satisfies $(a^4,\rho^2)a^3(a^4,\rho^2)^{-1}=b, (a^4,\rho^2)b(a^4,\rho^2)^{-1}=a^3b$. We may further check that $F$ is a regular subgroup of $\Hol(E)$. Since $V_4$ is the unique proper nontrivial normal subgroup of $A_4$, we have that a nontrivial morphism from $F$ to $\Z_p^*$ has image a cyclic group of order $3$. We have then two cases.

\begin{enumerate}[1)]
\item If $p \not \equiv 1 \pmod{3}$, the unique morphism from $F$ to $\Z_p^*$ is the trivial one and there is just one brace with additive group $\Z_p\times \Z_{6}\times \Z_2$ and multiplicative group $\Z_p\rtimes A_4$, the one whose multiplicative group is a direct product.
\item If $p \equiv 1 \pmod{3}$, let $\zeta_3$ be a generator of the (unique) subgroup of order 3 of $\Z_p^*$. We may define two morphisms from $F$ to $\Z_p^*$, with kernel $\langle a^3,b \rangle$, namely

    $$\tau_1: (a^4,\rho^2) \mapsto \zeta_3, \quad \tau_2: (a^4,\rho^2) \mapsto \zeta_3^{-1}.$$

\noindent
We note that $(a^4,\rho^2)^{-1}=(a^2,\rho^4)=\sigma(a^4,\rho^2)\sigma$, hence $\tau_1=\tau_2 \Phi_{\sigma}$ and we obtain one brace.
\end{enumerate}

We state the obtained result in the following proposition.

\begin{proposition}
 Let $p\geq 7$ be a prime number. We count the left braces with additive group
     $\Z_p\times \Z_{6}\times \Z_2$ and multiplicative group $\Z_p\rtimes A_4$.
 \begin{enumerate}[1)]
     \item If $p\not \equiv 1 \pmod{3}$ there is just one such brace, which is a direct product.
      \item If $p\equiv 1 \pmod{3}$ there are 2 such braces. One is a direct product and the second one has kernel isomorphic to $V_4$.
 \end{enumerate}
 \end{proposition}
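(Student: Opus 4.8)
The plan is to run, for the last remaining multiplicative group $F\cong A_4$, the same argument used in Section~\ref{braces} for the cyclic and bicyclic cases. By Corollary~\ref{corbraceprod} every left brace of size $12p$ is a direct or semidirect product of the trivial brace of size $p$ and a left brace of size $12$, so by Proposition~\ref{determsemid} the braces with additive group $\Z_p\times E$ whose multiplicative group has the form $\Z_p\rtimes_\tau F$ with $F\cong A_4$ are parametrized by pairs $(F,\tau)$, where $F$ ranges over conjugacy classes of regular subgroups of $\Hol(E)$ isomorphic to $A_4$ and $\tau:F\to\Aut(\Z_p)\simeq\Z_p^*$ ranges over classes of group morphisms under $\tau\simeq\tau'$ iff $\tau=\tau'\circ\Phi_\nu|_F$ for some $\nu\in\Aut(E)$. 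The table of braces of size $12$ shows such an $F$ occurs only for $E=C_6\times C_2$, where there is a single brace of that kind, so by \cite{B} and \cite{BNY}~Lemma~2.1 there is a single conjugacy class of regular subgroups of $\Hol(E)$ isomorphic to $A_4$; one works with the explicit representative $F=\langle a^3,b,(a^4,\rho^2)\rangle\subset\Hol(C_6\times C_2)$ exhibited above, with $V_4=\langle a^3,b\rangle$, checking by a short computation (notation as in Remark~\ref{hol}) that it is regular, that $a^3,b$ are commuting involutions, that $(a^4,\rho^2)$ has order $3$ and conjugates $a^3\mapsto b\mapsto a^3b$, hence $F\simeq V_4\rtimes C_3=A_4$.

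Next I would enumerate the group morphisms $\tau:F\to\Z_p^*$. Since $\Z_p^*$ is cyclic, every such $\tau$ factors through the abelianization $A_4/[A_4,A_4]=A_4/V_4\simeq C_3$; hence $\tau$ is trivial unless $3\mid p-1$, and when $3\mid p-1$ there are exactly two nontrivial morphisms, both with kernel $V_4$, namely $\tau_1:(a^4,\rho^2)\mapsto\zeta_3$ and $\tau_2:(a^4,\rho^2)\mapsto\zeta_3^{-1}$, with $\zeta_3$ a generator of the unique order-$3$ subgroup of $\Z_p^*$.

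It then remains to count these morphisms modulo the equivalence of Proposition~\ref{determsemid}. The trivial morphism always forms a class by itself and corresponds to the brace whose multiplicative group is the direct product $\Z_p\times A_4$; this accounts for the single brace when $p\not\equiv1\pmod 3$. When $p\equiv1\pmod 3$, I would show $\tau_1\simeq\tau_2$ by taking $\nu=\sigma$ of Remark~\ref{hol}: the inner automorphism $\Phi_\sigma$ of $\Hol(E)$ maps $V_4=\langle a^3,b\rangle$ onto itself and sends $(a^4,\rho^2)$ to $(a^2,\rho^4)=(a^4,\rho^2)^{-1}\in F$, so $\Phi_\sigma(F)=F$ and $\tau_2\circ\Phi_\sigma|_F=\tau_1$. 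Thus $\tau_1,\tau_2$ lie in one class and produce a single further brace, with kernel $V_4$, giving two braces in total when $p\equiv1\pmod 3$, as claimed.

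The only step that really requires care is this last identification: one must exhibit an automorphism of $E$ whose induced inner automorphism of $\Hol(E)$ simultaneously stabilizes $F$ and induces inversion on $F/V_4\simeq C_3$, and then verify that $\sigma$ does so. This reduces to a short computation in $\Hol(C_6\times C_2)$ from $\sigma(a)=a^5$, $\sigma(b)=a^3b$, $\sigma\rho^2\sigma^{-1}=\rho^{-2}$; the relation $\Ker\tau'=\Phi_\nu(\Ker\tau)$ recorded at the end of the proof of Proposition~\ref{determsemid} serves as a consistency check, compatible here with both kernels being $V_4$.
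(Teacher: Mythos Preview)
Your proposal is correct and follows essentially the same argument as the paper: you use the same explicit regular subgroup $F=\langle a^3,b,(a^4,\rho^2)\rangle\subset\Hol(C_6\times C_2)$, the same classification of morphisms $\tau:F\to\Z_p^*$ via the quotient $A_4/V_4\simeq C_3$, and the same key identification $\tau_1=\tau_2\circ\Phi_\sigma|_F$ coming from $\Phi_\sigma(a^4,\rho^2)=(a^2,\rho^4)=(a^4,\rho^2)^{-1}$. The only cosmetic difference is that you phrase the enumeration of $\tau$'s via the abelianization of $A_4$, while the paper uses that $V_4$ is the unique proper nontrivial normal subgroup; these are equivalent observations.
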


To be used in Section \ref{HG12p}, we compute $S_0(\tau)=\{ g \in \Aut F \, | \, \tau g=\tau \}$ for the two nontrivial morphisms from $F$ to $\Z_p^*$. We have $\Aut A_4 \simeq S_4$ and the isomorphism is obtained by sending a permutation in $S_4$ to the corresponding conjugation automorphism. We obtain $S_0(\tau_1)=S_0(\tau_2)=A_4$.

\subsection{$F=D_{2\cdot 6}$}

Let us write $F=\langle r,s \mid r^6=\Id, s^2=\Id, srs=r^5 \rangle$. We describe the morphisms $\tau:F \rightarrow \Z_p^*$. To be used in Section \ref{HG12p}, we compute $S_0(\tau)=\{ g \in \Aut F \, | \, \tau g=\tau \}$. We have $\Aut D_{2\cdot 6}=\langle \rho,\sigma \rangle \simeq D_{2\cdot 6}$, where $\rho$ and $\sigma$ are defined as follows.

$$\begin{array}{llll} \rho:& r & \mapsto & r \\ & s & \mapsto & rs \end{array}, \quad \begin{array}{llll} \sigma:&r & \mapsto & r^5 \\ & s& \mapsto & s \end{array}.$$

The only nontrivial morphisms from $F$ to $\Z_p^*$  are three morphisms with kernel of order 6, namely

$$\begin{array}{lllr} \tau_1:& r & \mapsto & 1 \\ & s & \mapsto & -1 \end{array}, \quad \begin{array}{lllr} \tau_2:&r & \mapsto & -1 \\ & s& \mapsto & -1 \end{array}, \quad \begin{array}{lllr} \tau_3:& r & \mapsto & -1 \\ & s & \mapsto & 1 \end{array},$$

\noindent
with kernels $\langle r\rangle, \langle r^2,rs \rangle$ and $\langle r^2,s\rangle$, respectively. We observe that $\Ker \tau_1$ is cyclic, while $\Ker \tau_2$ and $\Ker \tau_3$ are isomorphic to the dihedral group $D_{2\cdot 3}$. We have $S_0(\tau_1)=\Aut F, S_0(\tau_2)=S_0(\tau_3)=\langle \rho^2,\sigma \rangle$.

\subsubsection*{Case $E=C_{12}$}

There are two regular subgroups of $\mathrm{Hol}(E)$ isomorphic to $D_{2\cdot6}$ $$F_1=\langle\alpha_1=(2,1),\beta_1=(1,11)\rangle,\quad F_2=\langle\alpha_2=(1,7),\beta_2=(3,11)\rangle.$$ For $i\in\{1,2\}$, $\alpha_i$ has order $6$, $\beta_i$ has order $2$, and $\alpha_i\beta_i\alpha_i=\beta_i$, so $F_i\cong D_{2\cdot 6}$. It is checked easily that $F_i$ is regular. We have now $r=\alpha_i, s=\beta_i, i=1,2$.

We consider the morphisms from $F$ to $\Z_p^*$ with kernel of order 6.
 Since $\Ker(\tau_1)$ is cyclic while $\Ker \tau_2$ and $\Ker \tau_3$ are not,  $\tau_1$ is not conjugated to the other two morphisms. We denote $\tau_2^{(i)}, \tau_3^{(i)}:F_i \rightarrow \Z_p^*, i=1,2$.  Since $\Phi_7(\alpha_1)=\alpha_1$ and $\Phi_7(\beta_1)=\alpha_1^3 \beta_1$, we obtain $\tau_2^{(1)}=\tau_3^{(1)}\Phi_7$. For $\tau_2^{(2)}$ and $\tau_3^{(2)}$ to be conjugated, we would need $\Phi_{\nu}(\beta_2)=\alpha_2^k\beta_2$, with an odd $k$. Since the second component of $\beta_2$ is 11 and the second component of $\alpha_2^k \beta_2$ is 5, for an odd $k$, there is no such $\Phi_{\nu}$. Hence $\tau_2^{(2)}$ and $\tau_3^{(2)}$ are not conjugated and we obtain five braces, two of which have order 6 cyclic kernel.

\begin{proposition}
 Let $p\geq 7$ be a prime number. Then there are 7 left braces with additive group $\Z_p\times C_{12}$ and multiplicative group $\Z_p\rtimes D_{2\cdot 6}$. Among these, two of them are a direct product,  two other have cyclic kernel of order 6 and the other three have kernel isomorphic to $D_{2\cdot3}$.
\end{proposition}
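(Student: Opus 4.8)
The plan is to apply Proposition~\ref{determsemid} with $E=C_{12}$, exactly as in the discussion preceding the statement, keeping track of the two invariants of a pair $(F,\tau)$ that matter here: the isomorphism type of $\Ker\tau$ and the $\Z_{12}^*$-component of the generators of $F\subset\Hol(C_{12})$. The input on braces of size $12$ is that there are exactly two $\Hol(C_{12})$-conjugacy classes of regular subgroups isomorphic to $D_{2\cdot 6}$; I would take as representatives the subgroups $F_1=\langle\alpha_1,\beta_1\rangle$ and $F_2=\langle\alpha_2,\beta_2\rangle$ of the statement, checking directly from the presentation that each is regular, is isomorphic to $D_{2\cdot 6}$, and that they lie in distinct conjugacy classes (for instance, every reflection of $F_1$ has $\Z_{12}^*$-component $11$, whereas some reflection of $F_2$ has component $5$, and conjugation in $\Hol(C_{12})$ preserves this component). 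By Proposition~\ref{determsemid}, the braces with additive group $\Z_p\times C_{12}$ and multiplicative group $\Z_p\rtimes D_{2\cdot 6}$ then correspond bijectively to the equivalence classes of pairs $(F_i,\tau)$, $i\in\{1,2\}$, with $\tau\colon F_i\to\Z_p^*$ a group morphism, two morphisms on a fixed $F_i$ being equivalent iff $\tau=\tau'\circ\Phi_\nu|_{F_i}$ for some $\nu\in\Aut(C_{12})=\Z_{12}^*$ fixing $F_i$; pairs attached to $F_1$ and to $F_2$ are never equivalent, $F_1$ and $F_2$ being non-conjugate.

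The second step is to classify the morphisms $\tau\colon D_{2\cdot 6}\to\Z_p^*$. As $\Z_p^*$ is cyclic, $\tau$ factors through the abelianization $D_{2\cdot 6}^{\mathrm{ab}}\cong C_2\times C_2$, and since $2\mid p-1$ for every odd prime, the nontrivial ones are exactly the three morphisms $\tau_1,\tau_2,\tau_3$ listed above, with kernels $\langle r\rangle\cong C_6$ and $\langle r^2,rs\rangle\cong\langle r^2,s\rangle\cong D_{2\cdot 3}$. In particular this list is independent of the residue of $p$ modulo $12$, which is what makes the count uniform for all $p\geq 7$.

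The third step is to work out the $\Phi_\nu$-orbits of these morphisms on each $F_i$. On either $F_i$ the trivial morphism forms its own class and gives the brace whose multiplicative group is the direct product $\Z_p\times C_{12}$, accounting for $2$ braces. The morphism $\tau_1$ also forms its own class: an automorphism of $D_{2\cdot 6}$ preserves the isomorphism type of the kernel, so $\tau_1$ could only be equivalent to a morphism with cyclic kernel, and it is the only such; thus each $F_i$ contributes one brace with cyclic kernel of order $6$, $2$ in total. It remains to decide, for each $F_i$, whether $\tau_2\simeq\tau_3$. For $F_1$, conjugation by $\nu=7$ works: $\Phi_7(\alpha_1)=\alpha_1$ and $\Phi_7(\beta_1)=\alpha_1^3\beta_1$, so $\Phi_7$ fixes $F_1$ and maps $\Ker\tau_2$ onto $\Ker\tau_3$, hence $\tau_2=\tau_3\circ\Phi_7$ and $F_1$ contributes a single brace with kernel $D_{2\cdot 3}$. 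For $F_2$, a $\Phi_\nu$ realizing such an equivalence would have to send $\beta_2$ to $\alpha_2^k\beta_2$ with $k$ odd; but $\Phi_\nu$ leaves the $\Z_{12}^*$-component of each element of $\Hol(C_{12})$ unchanged, and that component is $11$ for $\beta_2$ but $5$ for $\alpha_2^k\beta_2$ with $k$ odd, a contradiction. So $\tau_2\not\simeq\tau_3$ on $F_2$, and $F_2$ contributes two braces with kernel $D_{2\cdot 3}$. Altogether: $2$ direct products, $2$ braces with cyclic kernel of order $6$, and $1+2=3$ braces with kernel isomorphic to $D_{2\cdot 3}$, giving $7$ braces, as claimed.

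The step I expect to be the main obstacle is the last one, and specifically the $F_2$ case: one must be sure that \emph{no} automorphism of $C_{12}$ induces, by conjugation, an automorphism of $F_2$ interchanging the two $D_{2\cdot 3}$-kernels. Rather than searching through $\Aut(C_{12})\cong\Z_{12}^*$ case by case, the clean route is the $\Z_{12}^*$-component invariant used above, which disposes of it in one line; the remaining verifications (that $F_1$ and $F_2$ are regular copies of $D_{2\cdot 6}$, the factorization through the abelianization, and the identity $\tau_2=\tau_3\circ\Phi_7$) are routine.
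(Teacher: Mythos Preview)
Your argument is correct and follows essentially the same route as the paper: the same representatives $F_1,F_2$, the same list $\tau_1,\tau_2,\tau_3$ of nontrivial morphisms, the isomorphism type of $\Ker\tau$ to isolate $\tau_1$, the identity $\tau_2^{(1)}=\tau_3^{(1)}\circ\Phi_7$ via $\Phi_7(\alpha_1)=\alpha_1$, $\Phi_7(\beta_1)=\alpha_1^3\beta_1$, and the $\Z_{12}^*$-component obstruction ($11$ versus $5$) to show $\tau_2^{(2)}\not\simeq\tau_3^{(2)}$. The only cosmetic difference is that you count the two direct-product braces explicitly within the same argument, whereas the paper records five non-direct braces here and adds the two direct ones from Proposition~\ref{direct}.
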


\subsubsection*{Case $E=C_6\times C_2$}

If $E=C_6\times C_2$, we may take $F=\langle (a,\Id),(b,\rho^3) \rangle \subset \Hol(E)$, which is regular. Indeed, one may check that $(a,\Id)$ is of order $6$, $(b,\rho^3)$ is of order $2$, $(a,\Id)(b,\rho^3)(a,\Id)=(b,\rho^3)$ and $F$ has trivial stabilizer. We have now $r=(a,\Id), s=(b,\rho^3)$.

We consider the morphisms from $F$ to $\Z_p^*$ with kernel of order 6.
Again, since $\Ker(\tau_1)\cong C_6$ and $\Ker(\tau_i)\cong D_{2\cdot3}$, $i\in\{2,3\}$, $\tau_1$ is not conjugated to the other two morphisms. Since $\tau_2\Phi_{\rho^5}=\tau_3$, we obtain one brace with cyclic kernel and one brace with dihedral kernel.

\begin{proposition}
 Let $p\geq 7$ be a prime number. Then there are 3 left braces with additive group $\Z_p\times (C_{6}\times C_2)$ and multiplicative group $\Z_p\rtimes D_{2\cdot 6}$. Among these, one of them is a direct product, one has cyclic kernel of order 6 and the other one has kernel isomorphic to $D_{2\cdot3}$.
\end{proposition}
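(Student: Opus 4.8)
The plan is to apply Proposition~\ref{determsemid} with $n=12$ and $E=C_6\times C_2$. For every prime $p\geq 7$ hypothesis (H) holds, by the Sylow argument recalled just before Corollary~\ref{corbraceprod}, so Proposition~\ref{determsemid} tells us that the braces to be counted --- those whose additive group is $\Z_p\times(C_6\times C_2)$ and whose multiplicative group is of the form $\Z_p\rtimes_\tau D_{2\cdot 6}$ (with $\tau$ trivial giving the direct product $\Z_p\times D_{2\cdot 6}$) --- correspond bijectively to the pairs $(F,\tau)$, where $F$ runs over representatives of the conjugacy classes of regular subgroups of $\Hol(C_6\times C_2)$ isomorphic to $D_{2\cdot 6}$ and $\tau$ runs over $\mathrm{Hom}(F,\Aut(\Z_p))=\mathrm{Hom}(F,\Z_p^*)$ modulo the relation $\tau\simeq\tau'\iff\tau=\tau'\circ\Phi_\nu|_F$ for some $\nu\in\Aut(C_6\times C_2)$. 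So everything reduces to fixing one representative $F$ and then splitting $\mathrm{Hom}(F,\Z_p^*)$ into $\simeq$-classes.

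For the first step, the table computed with Magma at the head of the section on direct products records a single left brace with additive group $C_6\times C_2$ and multiplicative group $D_{2\cdot 6}$, so by Bachiller's correspondence~\cite{B} there is exactly one conjugacy class of regular $D_{2\cdot 6}$-subgroups of $\Hol(C_6\times C_2)$. As representative I would take $F=\langle(a,\Id),(b,\rho^3)\rangle$ (notation of Remark~\ref{hol}) and verify directly that $(a,\Id)$ has order $6$, that $(b,\rho^3)$ has order $2$, that $(a,\Id)(b,\rho^3)(a,\Id)=(b,\rho^3)$, and that the point stabiliser in $F$ is trivial; this identifies $F$ as a regular subgroup isomorphic to $\langle r,s\mid r^6=s^2=\Id,\ srs=r^{-1}\rangle$ via $r=(a,\Id)$, $s=(b,\rho^3)$.

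For the second step, the commutator subgroup of $D_{2\cdot 6}$ is $\langle r^2\rangle$, of order $3$, and $D_{2\cdot 6}/\langle r^2\rangle\simeq C_2\times C_2$, so every morphism $F\to\Z_p^*$ takes values in the $2$-torsion $\{\pm1\}$ of $\Z_p^*$; there are exactly four of them, namely the trivial one --- which yields the direct-product brace --- and three nontrivial morphisms $\tau_1,\tau_2,\tau_3$ whose kernels are the three index-$2$ subgroups $\langle r\rangle\simeq C_6$, $\langle r^2,rs\rangle\simeq D_{2\cdot 3}$ and $\langle r^2,s\rangle\simeq D_{2\cdot 3}$. Since each $\Phi_\nu$ restricts to an isomorphism of $F$ with $\Phi_\nu(F)$, it preserves the isomorphism type of the kernel of a morphism; hence $\tau_1$, the only nontrivial morphism with cyclic kernel, is alone in its $\simeq$-class and gives the brace with cyclic kernel of order $6$, while $\tau_2$ and $\tau_3$ cannot be $\simeq$-equivalent to the trivial morphism or to $\tau_1$. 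It then remains to check that $\tau_2\simeq\tau_3$, which I would do by exhibiting an automorphism $\nu\in\Aut(C_6\times C_2)$ with $\Phi_\nu(F)=F$ whose restriction to $F$ maps $\Ker\tau_2$ onto $\Ker\tau_3$ (equivalently $\tau_2\circ\Phi_\nu|_F=\tau_3$). This leaves exactly three $\simeq$-classes --- the trivial morphism, $\{\tau_1\}$ and $\{\tau_2,\tau_3\}$ --- hence three braces, one a direct product, one with cyclic kernel of order $6$ and one with kernel isomorphic to $D_{2\cdot 3}$.

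The computations inside $\Hol(C_6\times C_2)$ --- that $F$ is regular, and the explicit normalising automorphism conjugating $\tau_2$ to $\tau_3$ --- are routine finite verifications. The step I would treat most carefully, and the main potential obstacle, is the uniqueness of the conjugacy class of regular $D_{2\cdot 6}$-subgroups of $\Hol(C_6\times C_2)$: if there were a second class, a pair $(F',\tau')$ with $F'$ not conjugate to $F$ could contribute an unaccounted brace. I would rely on the Magma computation quoted in the direct-products section for this, or reprove it by classifying the regular subgroups of order $12$ of $\Hol(C_6\times C_2)$ up to $\Aut(C_6\times C_2)$-conjugacy and retaining those isomorphic to $D_{2\cdot 6}$.
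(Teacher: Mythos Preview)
Your proposal is correct and follows essentially the same route as the paper: both invoke Proposition~\ref{determsemid}, fix the same representative $F=\langle(a,\Id),(b,\rho^3)\rangle$ (relying on the uniqueness of the conjugacy class recorded in the Magma table), list the three nontrivial morphisms $\tau_1,\tau_2,\tau_3$ distinguished by whether the kernel is $C_6$ or $D_{2\cdot 3}$, observe that $\tau_1$ is isolated because $\Phi_\nu$ preserves the isomorphism type of $\Ker\tau$, and then collapse $\tau_2$ and $\tau_3$ into a single class. The only point where the paper is more explicit is the last step: it exhibits the conjugating automorphism directly, namely $\tau_2\Phi_{\rho^5}=\tau_3$, whereas you defer this to a routine verification.
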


\subsection{$F=\Dic_{12}$}

The dicyclic group $\Dic_{12}$ is a group with 12 elements that can be presented as
$$\Dic_{12}=\langle x,y \, | \, x^3=1, y^4=1, yxy^{-1}=x^2\rangle.$$

We determine now the possible morphisms $\tau:F \rightarrow \Z_p^*$. To be used in Section \ref{HG12p}, we compute $S_0(\tau)=\{ g \in \Aut F \, | \, \tau g=\tau \}$. We have $\Aut \Dic_{12}=\langle \rho,\sigma \rangle \simeq D_{2\cdot 6}$, where $\rho$ and $\sigma$ are defined as follows.

$$\begin{array}{llll} \rho:& x & \mapsto & x \\ & y & \mapsto & xy^{-1} \end{array}, \quad \begin{array}{llll} \sigma:&x & \mapsto & x^{-1} \\ & y& \mapsto & y \end{array}.$$

\begin{enumerate}[1)]
\item There is a unique morphism $\tau$ from $F$ to $\Z_p^*$ with kernel of order 6, namely the one sending the generator $x$ to $1$ and $y$ to $-1$. We have $S_0(\tau)=\Aut F$.

\item  If $p \equiv 1 \pmod{4}$, let $\zeta_4$ be a generator of the subgroup of order 4 of $\Z_p^*$. We may define two morphisms from $F$ to $\Z_p^*$ with  kernel $\langle x \rangle$:
     $$\begin{array}{rlrl}
   \tau_1: &  x \mapsto 1  \qquad \qquad \qquad & \tau_2: & x \mapsto 1\\[2mm]
           & y \mapsto \zeta_4 &    & y \mapsto \zeta_4^{-1}.
    \end{array}$$

\noindent
We have $S_0(\tau_1)=S_0(\tau_2)=\langle \rho^2,\sigma \rangle$.

\end{enumerate}

\subsubsection*{Case $E=C_{12}$}

We know that in $\Hol(C_{12})$ there exists only a subgroup isomorphic to $F$ and it is regular.
We may take

$$F=\langle x=(4,1),y=(1,5) \rangle \subset \Hol(E),$$

\noindent
following the notation in Remark \ref{hol}. The element $x$ has order 3, the element $y$ has order 4 and they satisfy the relation $yxy^{-1}=x^2$. We may check that $F$ is a regular subgroup of $\Hol(C_{12})$.

We determine now the conjugation relations between the morphisms $\tau:F \rightarrow \Z_p^*$.

For the morphisms from $F$ to $\Z_p^*$ with  kernel $<x>$, we observe that $\tau_2=\tau_1 \Phi_{7}$, so we obtain, in this case, only one brace.

We state the obtained result in the following proposition.

\begin{proposition}
 Let $p\geq 7$ be a prime number. We count the left braces with additive group
     $\Z_p\times C_{12}$ and multiplicative group $\Z_p\rtimes  \Dic_{12}$.
 \begin{enumerate}[1)]
     \item If $p\not \equiv 1 \pmod{4}$ there are 2 such braces. One of them is a direct product and the other one has a kernel of order 6.
     \item If $p\equiv 1 \pmod{4}$ there are 3 such braces. One of them is a direct product, and the other two have kernels of order 6 and 3, respectively.
   \end{enumerate}
\end{proposition}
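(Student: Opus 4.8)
The plan is to specialize Proposition \ref{determsemid} to $n=12$ and $E=C_{12}$. It reduces the count to the following: the left braces with additive group $\Z_p\times C_{12}$ and multiplicative group $\Z_p\rtimes\Dic_{12}$ are in bijection with the equivalence classes of pairs $(F,\tau)$, where $F$ is a fixed regular subgroup of $\Hol(C_{12})$ isomorphic to $\Dic_{12}$ and $\tau\colon F\to\Z_p^*\cong\Aut(\Z_p)$ is a group morphism, two morphisms $\tau,\tau'$ being equivalent when $\tau=\tau'\circ\Phi_\nu|_F$ for some $\nu\in\Aut(C_{12})=\Z_{12}^*$ with $\Phi_\nu(F)=F$. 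So three things must be carried out: identify $F$, list the morphisms $\tau$, and group them into equivalence classes.

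For the first, I would note (taking it from the classification of regular subgroups of $\Hol(C_{12})$, or by a direct check) that $\Hol(C_{12})$ has, up to conjugacy, a unique regular subgroup isomorphic to $\Dic_{12}$, and fix the representative $F=\langle x=(4,1),\, y=(1,5)\rangle$ in the notation of Remark \ref{hol}: one verifies that $x$ has order $3$, $y$ has order $4$, $yxy^{-1}=x^2$ and that $F$ is regular, so $F$ is identified with $\Dic_{12}$ through its standard presentation.

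For the second, since $\Z_p^*$ is cyclic, every $\tau$ factors through the abelianization of $\Dic_{12}$; from $yxy^{-1}=x^2$ the relation $x=x^2$, hence $x=1$, holds in $\Dic_{12}^{\mathrm{ab}}$, so $\Dic_{12}^{\mathrm{ab}}\cong C_4$, generated by the class of $y$. Thus $\tau$ is determined by $\tau(x)=1$ and the value $\tau(y)$, an element of $\Z_p^*$ of order dividing $4$, which gives $\gcd(4,p-1)$ morphisms. If $p\not\equiv 1\pmod 4$ these are the trivial one and $y\mapsto -1$, the latter with kernel $\langle x,y^2\rangle$ of order $6$. If $p\equiv 1\pmod 4$, taking a generator $\zeta_4$ of the order-$4$ subgroup of $\Z_p^*$ there are two more, $\tau_1\colon y\mapsto\zeta_4$ and $\tau_2\colon y\mapsto\zeta_4^{-1}$, both with kernel $\langle x\rangle$ of order $3$.

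For the third, the observation recorded at the end of the proof of Proposition \ref{determsemid} shows that equivalent morphisms have kernels of the same order; hence the morphisms with kernels of orders $12$, $6$ (and, when $p\equiv 1\pmod 4$, also $3$) lie in distinct classes. The trivial morphism yields the direct product; the morphism $y\mapsto -1$ is the only one with kernel of order $6$ (as $C_4$ has a unique index-$2$ subgroup), so it forms a class by itself. What remains is to see that $\tau_1\simeq\tau_2$ when $p\equiv 1\pmod 4$: computing the inner automorphism $\Phi_7$ of $\Hol(C_{12})$ on the generators of $F$ gives $\Phi_7(x)=x$ and $\Phi_7(y)=(7,5)=y^3=y^{-1}$, so $\tau_2=\tau_1\circ\Phi_7|_F$ and $\Phi_7(F)=F$. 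Therefore there are $2$ classes when $p\not\equiv 1\pmod 4$ and $3$ classes when $p\equiv 1\pmod 4$, with the stated kernel orders, which is the assertion. The only genuine computation is the evaluation of $\Phi_7$ on $x$ and $y$; the rest is bookkeeping with $\Dic_{12}^{\mathrm{ab}}$, and the single point to treat with care is that the equivalence relation cannot merge two morphisms whose kernels have different sizes.
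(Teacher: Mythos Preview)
Your proof is correct and follows essentially the same route as the paper: the same representative $F=\langle(4,1),(1,5)\rangle$, the same list of morphisms $\tau$, and the same key identification $\tau_2=\tau_1\circ\Phi_7|_F$. You add a bit more scaffolding (the abelianization $\Dic_{12}^{\mathrm{ab}}\cong C_4$ to enumerate the $\tau$'s, and the explicit invocation of the kernel-preservation remark to separate the classes), but the argument is the paper's.
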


\subsubsection*{Case $E=C_6\times C_2$}

If $E=C_6\times C_2$, there is only a conjugacy class (of length 3)  of subgroups isomorphic to $\Dic_{12}$ that is regular.

We may take

$$F=\langle x=(a^2,Id),y=(b,\sigma) \rangle \subset \Hol(E),$$

\noindent
following the notation in Remark \ref{hol}.
 The element $x$ has order 6, the element $y$ has order 4 and they satisfy the relation $yxy^{-1}=x^2$.  We may check that $F$ is a regular subgroup of $\Hol(C_6\times C_2)$.

We determine now the conjugation relations between the  morphisms $\tau:F \rightarrow \Z_p^*$.

For the morphisms $\tau$ with a kernel of order 3, we observe that $\tau_2=\tau_1 \Phi_{\sigma}$, so we obtain, in this case, only one brace.

We state the obtained result in the following proposition.

\begin{proposition}
 Let $p\geq 7$ be a prime number. We count the left braces with additive group
     $\Z_p\times (C_{6}\times C_2)$ and multiplicative group $\Z_p\rtimes \Dic_{12}$.

 \begin{enumerate}[1)]
     \item If $p\not \equiv 1 \pmod{4}$ there are 2 such braces. One of them is a direct product and the other one has a kernel of order 6.
     \item If $p\equiv 1 \pmod{4}$ there are 3 such braces. One of them is a direct product, and the other two have kernels of order 6 and 3, respectively.

 \end{enumerate}
\end{proposition}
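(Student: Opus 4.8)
The plan is to invoke Proposition~\ref{determsemid} with $n=12$ and $E=C_6\times C_2$. Since $p\ge 7$, hypothesis (H) holds, and combining Bachiller's correspondence with \cite{BNY} Lemma~2.1 (as recalled in the introduction) the isomorphism classes of left braces with additive group $N=\Z_p\times E$ are in bijection with the conjugacy classes of regular subgroups of $\Hol(N)$, hence with the pairs $(F,\tau)$ of Proposition~\ref{determsemid}; those with $F\cong\Dic_{12}$ are precisely the braces whose multiplicative group is a semidirect product $\Z_p\rtimes_\tau F$ with $F\cong\Dic_{12}$ (the direct product when $\tau$ is trivial). So I would first fix $F=\langle x=(a^2,\Id),\,y=(b,\sigma)\rangle\subseteq\Hol(E)$, check with the product rule of Remark~\ref{hol} that $x$ has order $3$, $y$ has order $4$ and $yxy^{-1}=x^2$, that $F$ acts regularly on $E$, so that $F\cong\Dic_{12}$, and use the fact stated above that this is the only conjugacy class of regular subgroups of $\Hol(E)$ isomorphic to $\Dic_{12}$.

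Next I would enumerate the morphisms $\tau\colon F\to\Z_p^*$. As $\Z_p^*$ is abelian, $\tau$ factors through $F^{\mathrm{ab}}=F/\langle x\rangle\cong C_4=\langle\bar y\rangle$ (the relation $yxy^{-1}=x^2$ forces $x\in\ker\tau$), so $\tau$ has image of order $1$, $2$ or $4$ and kernel of order $12$, $6$ or $3$. The trivial morphism yields the direct product. There is exactly one morphism with image of order $2$, namely $x\mapsto 1$, $y\mapsto -1$ (unique because $\Z_p^*$ contains a unique element of order $2$), with kernel $\langle x,y^2\rangle\cong C_6$; it exists for every $p\ge 7$. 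A morphism with image of order $4$ exists if and only if $4\mid p-1$, and then there are exactly two of them, sending $x\mapsto 1$ and $y$ to $\zeta_4$ respectively $\zeta_4^{-1}$, where $\zeta_4$ generates the order-$4$ subgroup of $\Z_p^*$; both have kernel $\langle x\rangle\cong C_3$. In particular no morphism has kernel of order $4$.

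Finally I would sort these morphisms into classes under the relation $\tau\sim\tau'\iff\tau=\tau'\circ\Phi_\nu|_F$, $\nu\in\Aut(E)$. One checks that $\Phi_\nu$ preserves $F$ exactly when $\nu$ centralizes $\sigma$ in $\Aut(E)=\langle\rho,\sigma\rangle\cong D_{2\cdot 6}$, that is, when $\nu\in\langle\rho^3,\sigma\rangle\cong C_2\times C_2$, and a short computation with Remark~\ref{hol} shows that $\Phi_{\rho^3}$, $\Phi_\sigma$, $\Phi_{\rho^3\sigma}$ restrict on $F$ to $(x\mapsto x^{-1},\,y\mapsto y)$, $(x\mapsto x^{-1},\,y\mapsto y^{-1})$ and $(x\mapsto x,\,y\mapsto y^{-1})$ respectively. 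On $F^{\mathrm{ab}}$ these induce only the identity and the inversion $\bar y\mapsto\bar y^{-1}$, so $\tau\circ\Phi_\nu|_F$ is either $\tau$ or the morphism $y\mapsto\tau(y)^{-1}$. Hence the trivial morphism and the morphism with kernel of order $6$ (fixed, since $(-1)^{-1}=-1$) are each alone in their class, whereas the two morphisms with kernel of order $3$ are interchanged by $\Phi_\sigma$ and so form a single class. Counting classes gives $2$ braces when $p\not\equiv 1\pmod 4$, namely the direct product and the brace with kernel of order $6$, and $3$ braces when $p\equiv 1\pmod 4$, the additional one having kernel of order $3$, as claimed. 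The only delicate point is the bookkeeping in this last step — pinning down $N_{\Aut(E)}(F)$ and the automorphisms of $F$ it induces; everything else is forced by $F^{\mathrm{ab}}\cong C_4$ and the cyclic structure of $\Z_p^*$.
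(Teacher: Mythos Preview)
Your proof is correct and follows the same approach as the paper: apply Proposition~\ref{determsemid} with the representative $F=\langle(a^2,\Id),(b,\sigma)\rangle$, list the morphisms $\tau$ (factoring through $F^{\mathrm{ab}}\cong C_4$), and identify the two kernel-order-$3$ morphisms via $\Phi_\sigma$. The paper is much terser---it simply asserts $\tau_2=\tau_1\Phi_\sigma$---whereas you carry out the fuller bookkeeping of determining $N_{\Aut(E)}(F)=\langle\rho^3,\sigma\rangle$ and the automorphisms of $F$ it induces; this extra work is sound and makes the argument self-contained.
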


\subsection{Total numbers}

For a prime number $p\geq 7$ we compile in the following tables the total number of left braces of size $12p$.

The
additive group is $\Z_p\times E$ and the multiplicative group is a semidirect product $\Z_p\rtimes F$.
In the first column we have the possible $E$'s and in the first row the possible $F$'s.

\newpage
\begin{itemize}
    \item If $p\equiv 11 \pmod{12}$
\begin{center}
\begin{tabular}{r|c|c|c|c|c|c}
&\, \, $C_{12}$ \, \,   &   $C_6\times C_2$   & \, \,  $A_4$ \, \,   & \, \, $D_{2\cdot 6}$\, \,  & \, $\Dic_{12}$ \, \, & \\
\hline
   $C_{12}$   &2 &3 & 0 & 7 & 2&\\
  $C_6\times C_2$  &2 & 2&1 & 3 &2&\\

  \hline
  &4& 5 &1  & 10 & 4 & $\mathbf{24}$\\
\end{tabular}
\end{center}

    \item If $p\equiv 5 \pmod{12}$
\begin{center}
\begin{tabular}{r|c|c|c|c|c|c}
&\, \, $C_{12}$ \, \,   &   $C_6\times C_2$   & \, \,  $A_4$ \, \,   & \, \, $D_{2\cdot 6}$\, \,  & \, $\Dic_{12}$ \, \, & \\
\hline
   $C_{12}$   &3 &3 & 0 & 7 &3&\\
  $C_6\times C_2$  &3 &2 &1 & 3&3&\\

  \hline
  &6& 5 & 1 & 10 & 6 & $\mathbf{28}$\\
\end{tabular}
\end{center}

    \item If $p\equiv 7 \pmod{12}$
\begin{center}
\begin{tabular}{r|c|c|c|c|c|c}
&\, \, $C_{12}$ \, \,   &   $C_6\times C_2$   & \, \,  $A_4$ \, \,   & \, \, $D_{2\cdot 6}$\, \,  & \, $\Dic_{12}$ \, \, & \\
\hline
   $C_{12}$   & 4&6 & 0& 7 &2&\\
  $C_6\times C_2$  & 4& 4&2 &3 &2&\\

  \hline
  &8& 10 &2  & 10 & 4 & $\mathbf{34}$\\
\end{tabular}
\end{center}

    \item If $p\equiv 1 \pmod{12}$
\begin{center}
\begin{tabular}{r|c|c|c|c|c|c}
&\, \, $C_{12}$ \, \,   &   $C_6\times C_2$   & \, \,  $A_4$ \, \,   & \, \, $D_{2\cdot 6}$\, \,  & \, $\Dic_{12}$ \, \, & \\
\hline
   $C_{12}$   &6 &6 & 0& 7 &3&\\
  $C_6\times C_2$  &6 &4 & 2&3 &3&\\

  \hline
  &12& 10 &2  & 10 & 6 & $\mathbf{40}$\\
\end{tabular}
\end{center}

\end{itemize}

With the results summarized in the above tables, the validity of conjecture (\ref{conj}) is then established.

\section{Hopf Galois structures on a Galois field extension of degree $12p$}\label{HG12p}

Let $E, F$ be groups of order 12 with $E$ abelian. By computation with Magma, we obtain that the number of regular subgroups of  $\Hol(E)$ isomorphic to $F$ is as shown in the following table.

\vspace{0.5cm}
\begin{center}
\begin{tabular}{|c||c|c|c|c|c|}
\hline
$E \backslash F$ & \, \, $C_{12}$ \, \, & $C_6\times C_2$ & \, \, $A_4$ \, \, \, & \, \, $D_{2\cdot 6}$ \, \, & \,  $\Dic_{12}$ \, \, \\
\hline
\hline
$C_{12}$ &1&1&0&3& 1\\
\hline
$C_6\times C_2$ &3&1&2&3& 3\\
\hline
\end{tabular}
\end{center}

More precisely, for the groups $F_1, F_2$ defined in the case $F=D_{2\cdot 6}, E=C_{12}$, we obtain that $F_1$ is normal in $\Hol(E)$ while the length of the conjugation class of $F_2$ in $\Hol(E)$ is 2 and $F_2'=\langle (7,7),(9,11)\rangle$ is the second subgroup in this class. Using Byott's formula, Corollary \ref{autf} and the determination of $S_0$ given in Section \ref{braces}, we obtain  number of Hopf Galois structures of abelian type on a Galois field extension of degree $12p$.

The number of Hopf Galois structures of abelian type on a Galois extension with Galois group $G=\Z_p \rtimes_{\tau} F$ is as given in the following tables. The first column gives the group $F$ and the first row the kernel of the morphism $\tau:F \rightarrow \Z_p^*$ defining the semidirect product. In each case, we assume that the value of $p$ is such that a morphism $\tau:F \to \Z_p$ exists with the given kernel.

\begin{center}
{\bf Hopf Galois structures of type $C_{12p}$}

\vspace{0.3cm}
\begin{tabular}{|c||c|c|c|c|c|c|c|c|}
\hline
$F \backslash \Ker \tau$ & \,  $F$ \,  & \, $C_6$\,  & $D_{2\cdot 3}$  &\,  $C_4$ \, & \, $C_2^2$\,  & \, $C_3$\,  & \, $C_2$ \, &\, $\{1 \}$ \, \\
\hline
\hline
$C_{12}$ &1&$p$&-&$p$&-&$p$&$p$&$p$ \\
\hline
$C_6\times C_2$ &3&$3p$&-&-& $3p$&-&$3p$&-\\
\hline
$A_4$ &0&-&-&-&0&-&-&- \\
\hline
$D_{2\cdot 6}$ &9&$9p$&$9p$&-&-&-&-&-\\
\hline
$\Dic_{12}$ &3&$3p$&-&-&-&$3p$&-&-\\
\hline
\end{tabular}
\end{center}

\vspace{0.5cm}
\begin{center}
{\bf Hopf Galois structures of type $C_{6p}\times C_2$}

\vspace{0.3cm}
\begin{tabular}{|c||c|c|c|c|c|c|c|c|}
\hline
$F \backslash \Ker \tau$ & \,  $F$ \,  & \, $C_6$\,  & $D_{2\cdot 3}$  &\,  $C_4$ \, & \, $C_2^2$\,  & \, $C_3$\,  & \, $C_2$ \, &\, $\{1 \}$ \, \\
\hline
\hline
$C_{12}$ &1&$p$&-&$p$&-&$p$&$p$&$p$ \\
\hline
$C_6\times C_2$ &1&$p$&-&-& $p$&-&$p$&-\\
\hline
$A_4$ &4&-&-&-&$4p$&-&-&- \\
\hline
$D_{2\cdot 6}$ &3&$3p$&$3p$&-&-&-&-&-\\
\hline
$\Dic_{12}$ &3&$3p$&-&-&-&$3p$&-&-\\
\hline
\end{tabular}
\end{center}

\vspace{0.6cm}

\end{document}